\documentclass[10pt, reqno]{amsart}
 \usepackage{amsmath}
 \usepackage{amssymb}
\usepackage{bm}

\DeclareMathAccent{\mathring}{\mathalpha}{operators}{"17}

\newcommand{\mysection}[1]{\section{#1}
      \setcounter{equation}{0}}

\newtheorem{theorem}{Theorem}[section]
\newtheorem{lemma}[theorem]{Lemma}
\newtheorem{proposition}[theorem]{Proposition}
\newtheorem{corollary}[theorem]{Corollary} 

\theoremstyle{definition}
\newtheorem{assumption}[theorem]{Assumption}
\newtheorem{definition}[theorem]{Definition}
\theoremstyle{remark}
\newtheorem{remark}[theorem]{Remark}

 \makeatletter 
 \def\dashint{%
 \operatorname%
 {\,\,\text{\bf--}\kern-.98em\DOTSI\intop\ilimits@\!\!}}
\def\ninf{\qopname\relax\@empty{inf\phantom{p}\!\!\!}}
 \makeatother

\newcommand\bbeta{\text{\raise-.2ex\hbox{$\bm{\beta}$}}}

\newcommand\esssup{\operatornamewithlimits{esssup}}

\newcommand\bR{\mathbb{R}}

\newcommand\cF{\mathcal{F}}

\newcommand\cN{\mathcal{N}}

\begin{document}

\title[Strong solutions via approximations]
{Existence of strong solutions for It\^o's 
stochastic  equations via approximations. Revisited}

\author{I. Gy\"ongy}
\email{I.Gyongy@ed.ac.edu}
 \address{School of Mathematics and Maxwell Institute, University of Edinburgh, Scotland, UK}
 
\author{N.V. Krylov}
 
\email{nkrylov@umn.edu}
\address{127 Vincent Hall, University of Minnesota,
 Minneapolis, MN, 55455}

\keywords{Stochastic differential equations, strong solutions, 
Euler's approximations}

\subjclass[2020]{60H10, 60H17}

\begin{abstract}
Given  strong uniqueness for an
It\^o's 
stochastic equation, we prove that its solution can be
constructed on ``any'' probability space by using, for example,
 Euler's polygonal approximations. Stochastic equations in
$\bR^{d}$ and in domains in $\bR^{d}$
are considered. This is almost a copy
of an old article in which we correct
 errors
  in the  original proof of Lemma 4.1 
found by  Martin Dieckmann in 2013. 
 We present also a new result on the 
convergence of ``tamed Euler approximations" for SDEs with 
locally unbounded drifts, which we achieve by proving an estimate for 
appropriate exponential moments.

\end{abstract}

\maketitle

\mysection{Introduction}
We start with two examples illustrating the results we present in the paper.
Consider the stochastic differential equation
\begin{equation}
                                         \label{6.4.1}
dx(t)=\Big[\tan\big(-\tfrac{\pi}{2} x(t)\big
)+1\Big]
\,dt+{|1-|x(t)\|}^{\alpha}(x_+(t))^{\frac {1}{2}}\,dw(t),\quad  
x(0)=0   
\end{equation}
with a given $\alpha >0$, where $w$ is a Wiener process. 
Note that the 
drift coefficient $\tan (-\frac{\pi}{2} x)$ 
is not continuous at $x=2k+1$, for integers $k$,  and it does not 
satisfy the linear growth condition. Note moreover that the diffusion
coefficient
$|1-|x\|^{\alpha}(x_+)^{\frac{1}{2}}$ does not satisfy the linear
growth condition for $\alpha >\frac{1}{2} $ and it is not H\"older
continuous with exponent $1/2$ if $\alpha <1/2$. Consider also
the equation
\begin{equation} 
                                                \label{6.4.2} 
dx(t)=\Big[\tan\big(-\tfrac{\pi}{2} x(t)\big) +\text{\rm sign}\,
x(t) \Big] \,dt+{|1-|x(t)\|}^{\alpha}\,dw(t),\quad 
x(0)=0, 
\end{equation}
 and note that here the drift is discontinuous also at $0$.

The coefficients in the above equations are rather irregular, one can
define, however,  Euler's ``polygonal'' approximations: 
\begin{equation}
                                           \label{6.4.3}
dx_n(t)=b(x_n(\kappa_n(t)))\,dt+\sigma(x_n(\kappa_n(t)))\,dw(t),\quad  
x_n(0)=0 
\end{equation}
 for every integer $n>0$, where $\kappa_n(t):= \lfloor nt\rfloor/n$, with the
corresponding drift  and diffusion coefficient, setting for example
$b(x)=0$ when $x$ is an odd integer.  One expects that in each of
these examples $x_n$ converges in probability to a process which
solves the corresponding equation \eqref{6.4.1} and \eqref{6.4.2}, respectively.

In fact, the drift in equation \eqref{6.4.1} is Lipschitz continuous and the
diffusion coefficient is H\"older continuous with exponent
$\frac{1}{2}$ at $x=0$.  Therefore by a  well-known result of Yamada
and Watanabe \cite{15.}  one knows the existence (at least of a local)
strong solution to equation \eqref{6.4.1}. In the case of equation \eqref{6.4.2} one
can say the same due to a result of Veretennikov \cite{14.}, stating the existence of a
unique strong solution to the stochastic differential equation
\begin{equation}
                                           \label{6.4.4}
 dx(t)=b(t,x(t))\,dt+\sigma (t,x(t))\,dw(t),\qquad x_0\in 
\bR^d                     
\end{equation}
 in $\bR^d$, with a given $d_1$-dimensional Wiener process $w$,
if $b$, $\sigma$  are bounded measurable functions on $\Bbb
R_+\times\Bbb R^d$  with values in $\Bbb R^d$ and in $\Bbb R^{d\times
d_1}$, respectively; $\sigma {\sigma}^{ \ast}$  is uniformly elliptic;
$\sigma$ is H\"older continuous in $x\in \Bbb R$ with exponent
$\frac{1}{2}$ when $d=1$,  and it is Lipschitz in $x\in \Bbb R^d$ in
the multidimensional case.

The method of  establishing  these existence and uniqueness
theorems  is rather different from those used in the theory of
ordinary differential  equations. It is based on a famous result from
Yamada and Watanabe  \cite{15.} which reads  as follows. If any two
solutions to equation \eqref{6.4.4}
 on the same probability space with the
same  Wiener process almost surely coincide, and if there is a
solution to the equation   on some probability space with some 
suitable Wiener process, then there exists a strong solution to the
equation  with the given Wiener process. Shortly speaking, the
existence of a solution and the pathwise  uniqueness imply the
existence of the unique strong solution. (See also   Zvonkin and
Krylov \cite{16.}  and the references therein on this topic.) We emphasize
that by this approach one gets only pure existence result, without
presenting any construction of the solution. 

The existence of a solution to equation \eqref{6.4.4} with  bounded
measurable  coefficients is known under the additional condition that
$\sigma (t,x) $, $b(t,x)$ are continuous in $x$ (Skorokhod \cite{13.},
Stroock and  Varadhan \cite{12.}), or 
 $\sigma {\sigma}^{ \ast}$ is uniformly
elliptic (Krylov \cite{5.}, \cite{8.}), regarding recent progress
in the case of singular $b$ see \cite{Kr_20}
and the references therein. Hence  
  Veretennikov, Yamada and
Watanabe establish the existence of a strong solution 
(in  \cite{14.} and
in \cite{15.}, respectively) by proving the pathwise uniqueness. Their
proofs raise  the following questions. Is it possible to construct
the strong solutions in some classical way under the conditions of
their theorems? Define for example  Euler's approximations 
\eqref{6.4.3} to
equation \eqref{6.4.2}. Do these approximations
 converge to a stochastic
process in probability and can one construct a strong solution in 
this way?  Let us approximate  the coefficients in the equation 
\eqref{6.4.4}
by smooth ones. Do the strong solutions of  the corresponding
equations converge in probability to the strong solution of  equation
\eqref{6.4.4} under the  assumptions of the cited existence 
theorem? More
generally, does the  strong solution depend continuously, in the
topology of convergence  in probability, on the initial condition and
on the drift and diffusion coefficients? 

Our aim is to show that the answers to these questions are in the
affirmative. We prove, roughly speaking, that Euler's polygonal
approximations converge  uniformly in $t$ in bounded intervals, in
probability, to a process, which we show to be the strong solution,
if the pathwise uniqueness for the equation holds, provided the 
drift and  diffusion coefficients have some additional property
permitting the passage to the   limit. Such additional property is
their continuity in the space variable,  or the strong ellipticity of
the diffusion coefficient. (See Theorems 2.4 and 2.8 below.) In
particular, applying Corollaries 2.7 and 2.9 to equations 
\eqref{6.4.1} and
\eqref{6.4.2},  respectively with $D:=(-1,1)$, 
$D_k:=(-1+2^{-k},1-2^{-k})$ and with $V(t,x):=\frac{2-x^2}{1-x^2}$, 
we get that Euler's approximations $x_n(t)$, defined by 
\eqref{6.4.3}
converge uniformly in $t$ in bounded intervals in probability to some
stochastic processes, which are the  strong solutions of equations
\eqref{6.4.1} and \eqref{6.4.2} respectively.

Let us finally consider the following example of an 
SDE with singular drift 
\begin{equation}                                                                          \label{7.21.1}
dx(t)=|x(t)|^{-1/5}dt+(2+\sin(x(t)))\,dw (t) , \quad x(0)=0.
\end{equation}
By results on SDEs with locally unbounded drifts (see, e.g., 
\cite{10.}, \cite{2b}, \cite{15b},
\cite{Zh_20}, \cite{RZ_21_2}) one can see that this equation has a unique 
strong solution.  Note that the Euler approximations 
\eqref{6.4.3} are not meaningful, but we can define the ``tamed" Euler approximations
$$
dx_n(t)=b_n(x_n(\kappa_n(t))\,dt+(2+\sin((x_n(\kappa_n(t)))\,dt, \quad 
x_n(0)=0
$$
for example with $b_n(x)=|x|^{-1/5}\wedge\lambda_n$ for a sequence of positive constants 
$\lambda_n$ converging to infinity. Applying our result, Theorem \ref{theorem 6.21.5}
below, we get that if $\lambda_n$ converges to infinity sufficiently slowly, then 
the tamed Euler approximations converge to the solution $x(t)$ of equation \eqref{7.21.1},  
in probability, uniformly in $t$ in bounded intervals.

The possibility to show convergence of different approximations to
solutions of stochastic equations is based on the following simple
observation.

\begin{lemma}                                                             \label{lemma 6.4.1}

 Let $Z_n$ be a sequence of random elements in a
Polish space ($\Bbb E$, $\rho$)
 equipped with the   Borel $\sigma$--algebra. Then $Z_n$ converges in probability to an  
$\Bbb E$-valued random element if and only if
  for every
subsequences $Z_l$ and
$Z_m$ there exists a subsequence $v_k:=(Z_{l(k)},Z_{m(k)})$
converging weakly to a random  element $v$ supported on the diagonal
$
\{(x,y)\in \Bbb E\times \Bbb E\,:\,\, x=y\}.
$  
\end{lemma}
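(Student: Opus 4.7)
\medskip
\noindent\textbf{Proof plan.} The forward direction is essentially bookkeeping. If $Z_n\to Z$ in probability for some random element $Z$, then for any two subsequences $Z_{l(k)}$ and $Z_{m(k)}$ we still have convergence in probability to $Z$, hence the pair $(Z_{l(k)},Z_{m(k)})$ converges in probability, and a fortiori in distribution, to $(Z,Z)$, whose law sits on the diagonal. No further subsequence extraction is needed, so one may take $v_k$ to be the original sequence of pairs.

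\medskip
\noindent For the nontrivial direction, my plan is to show that $\{Z_n\}$ is Cauchy in probability and then invoke completeness of $\mathbb{E}$. The key tool is the continuous mapping theorem: since $\rho:\mathbb{E}\times\mathbb{E}\to\bR$ is continuous, weak convergence $v_k\Rightarrow v$ with $v$ supported on the diagonal yields $\rho(Z_{l(k)},Z_{m(k)})\Rightarrow 0$, and weak convergence to a constant upgrades to convergence in probability. Thus the hypothesis says: every pair of subsequences admits a further common subsequence along which $\rho(Z_{l(k)},Z_{m(k)})\to 0$ in probability.

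\medskip
\noindent Suppose for contradiction that $Z_n$ is not Cauchy in probability. Then there exist $\varepsilon>0$, $\delta>0$ and subsequences $l(k),m(k)\to\infty$ with
\[
\bP\bigl(\rho(Z_{l(k)},Z_{m(k)})>\varepsilon\bigr)\ge \delta\quad\text{for all }k.
\]
Applying the hypothesis to these two subsequences produces a further subsequence along which $\rho(Z_{l(k)},Z_{m(k)})\to 0$ in probability, contradicting the above bound. Hence $\{Z_n\}$ is Cauchy in probability.

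\medskip
\noindent Finally, since $(\mathbb{E},\rho)$ is Polish, Cauchy in probability implies convergence in probability: choose a subsequence $Z_{n(k)}$ with $\bP(\rho(Z_{n(k+1)},Z_{n(k)})>2^{-k})\le 2^{-k}$; by Borel--Cantelli $Z_{n(k)}$ is a.s.\ Cauchy, hence converges a.s.\ by completeness to some random element $Z$, and the Cauchy-in-probability property of the full sequence then forces $Z_n\to Z$ in probability. The only step that requires a moment of care is identifying ``weak convergence to a diagonal-supported measure'' with ``the difference tends to zero in probability''; once that observation is made via the continuous mapping theorem, the rest is a standard subsequence argument.
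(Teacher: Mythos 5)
Your proof is correct and follows essentially the same route as the paper: apply the continuous function $\rho$ to the weakly convergent pairs, upgrade weak convergence to the constant $0$ into convergence in probability, conclude that $\{Z_n\}$ is Cauchy in the metric of convergence in probability, and finish by completeness of that space. You merely spell out the contradiction/subsequence bookkeeping and the Borel--Cantelli completeness step that the paper leaves implicit.
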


The necessity of the condition is obvious.
To prove the sufficiency it is enough to note that for the continuous
function
$f(x,y)=\rho(x,y)$ the random variables $f(v_{k})$ converge to
$f(v)=0$ weakly and hence, $f(v_{k})\to 0$  {\em in probability}. This implies that
$\{Z_n\}$ is a Cauchy sequence in the  space of random
$\Bbb{E}$-valued elements with the metric corresponding to
convergence in probability. Since this space is complete, our
assertion holds indeed.

In our applications of the lemma Skorokhod's embedding method and the
assumption about pathwise uniqueness will allow us to check that the
limiting random element $v$ takes values in 
$\{(x,y)\in \Bbb E\times \Bbb E\,:\,\, x=y\}$.
 
We note that our approach is very close in its spirit to the
celebrated result of Yamada and Watanabe on the existence of strong
solutions via pathwise uniqueness. We assume somewhat more and in
return we can get more. From our approach it is  clear that the
strong solution depends continuously on the initial condition and on 
the drift and diffusion coefficient. In particular, it can be seen in
the same way as Theorems 2.4 and 2.8 are proved that, the strong
solution can be constructed by approximating the coefficients by
smooth ones. One can construct the strong solution by Euler's
approximations and approximating simultaneously the coefficients and
the  initial condition. We remark, that clearly we immediately get
the  convergence of Euler's approximations (or of the other
approximations  we mentioned) in probability in every metric space
$V$, were  these approximations are tight. (See Gy\"ongy, Nualart and
Sanz-Sol\'e \cite{3.},  were the convergence in probability of Wong-Zakai
type approximations are proved  in the modulus spaces introduced
there.) 

Finally we remark that the convergence of Euler's approximations
under various   conditions is proved by many authors. It is shown in
Krylov \cite{7.} that under the monotonicity  condition Euler's  polygonal
line method can be adjusted to prove (strong) solvability. Earlier
this was known from Maruyama \cite{9.} when the drift and diffusion
coefficients are Lipschitz continuous. The method of \cite{7.} was
afterward used in Alyushina \cite{1.} in a short proof of existence of
strong solutions under the monotonicity and the linear growth
conditions. Later a short and  simple proof of (strong) solvability
is presented in Krylov \cite{6.} under the monotonicity  condition and
under a growth condition which is weaker than the usual linear growth
condition. Moreover, the continuous dependence of the strong solution
on the coefficients is also  obtained. 

It is also worth mentioning that the fact that the pathwise uniqueness implies the
possibility of effective constructing the solutions has already been noticed in
Zvonkin and Krylov \cite{16.} (see, for instance, Lemma 3.2 there). Later Kaneko
and Nakao \cite{KN_88} exploited this fact without noticing \cite{16.}. In \cite{KN_88} the authors
consider equation (1.3) in $\bR^{d}$ and they assume that it admits a unique strong
solution $x(t)$. They show that $x(t)$ can be constructed by approximating the
coefficients and also by Euler's polygonal approximation. In what concerns
Euler's approximations they only consider equations in the whole space with
continuous coefficients satisfying the linear growth condition. We consider
equations also in domains of $\bR^{d}$ and with discontinuous coefficients as well.
We construct the strong solution without assuming its existence. Our basic
idea of proving convergence in probability is an extension of the idea 
of another result of Yamada and Watanabe saying that pathwise uniqueness implies
uniqueness in law. Essentially the same idea is used in \cite{5.}. Due to our above
lemma this idea becomes more apparent and its range of applicability becomes
evident.

 We remark that since the original version of the present paper was published 
there has been a growing 
interest in studying  the convergence of Euler's approximations for SDEs with irregular coefficients.  
For recent results on the rate of convergence we refer to \cite{DG2020}, 
\cite{LSz}, \cite{NT2016}, 
and the references therein.  

The paper is organized as follows. In the next section we formulate
our results  Theorems \ref{Theorem 2.4}, \ref{Theorem 2.8}, 
\ref{theorem 6.21.5}
and their corollaries. By Lemma 1.1
the proof of Theorem \ref{Theorem 2.4} is simple,  we present it in Section 3. To
prove Theorem \ref{Theorem 2.8} we need an estimate of the distribution for Euler's
approximations. Since such estimates play an important role not only
in the subject of  the paper, we present our estimate (Theorem 4.2
below) separately in Section 4.  We prove our main results, Theorem
2.8 and \ref{theorem 6.21.5}, in the last two sections. 

\mysection{Formulation of the results}                                  \label{Section 2}

  On a given stochastic basis $(\Omega,\cF, P,  (\cF_t)_{t\geq
0})$ we consider the stochastic differential equation
\begin{equation}
                                                   \label{6.4.2.1}
dx(t)=b(t,x(t))\,dt+\sigma (t, x(t))\,dw(t),\quad   x(0)=\xi  
\end{equation}
 in a domain $D$ of $\Bbb R^d$, where $(w(t),\cF_t)$ is a
$d_{1}$-dimensional Wiener  process, $\xi$ is an $\cF_0$-measurable random vector with values in $D$, 
$b$ and $\sigma$ are Borel functions on $\Bbb R_+\times D$ taking
values  in $\Bbb R^d$ and in $\Bbb R^{d\times d_{1}}$, respectively. 
For equation \eqref{6.4.2.1} to have sense we need the coefficients to be
defined for any $x\in\Bbb R^d$. Actually under our future assumptions
solutions of (2.1) will never leave $D$ so that values of $\sigma$
and $b$ outside $D$ are irrelevant and just for convenience we define
$\sigma(t,x)=0,b(t,x)=0$ for $x\not\in D, t\geq0$. Let 
$$ 0=t_0^n<t_1^n<t_2^n<...<t_i^n<t_{i+1}^n<...
$$ be a sequence of partitions of $\Bbb R_+$  
 such that $\lim_{i\to\infty}t_i^n=\infty$ for every $n\geq1$, and  for every $T>0$
$$ d_n(T):=\sup_{i:\,t_{i+1}\leq T}|t_{i+1}^n-t_i^n| \to 0
$$ as $n\to \infty$. We define Euler's ``polygonal'' approximations
as the process $(x_n(t))$ satisfying
\begin{equation}
                                                                                                \label{6.4.2.2}
dx_n(t)=b(t,x_n(\kappa_n(t))\,dt+\sigma (t,
x_n(\kappa_n(t))\,dw(t),\quad   x_n(0)=\xi  
\end{equation}
where $\kappa_n(t):=t_i^n$ for $t\in [t_i^n,t^n_{i+1})$. 
 
In the whole article $M(t)>0$ and $M_{1}(t)>0,M_{2}(t)>0,...$ are
fixed
  locally integrable functions on $[0,\infty)$. We will use the
following assumptions:
 
 ($i$) there exists an increasing sequence of bounded
 domains $\{ D_k\}_{k=1}^\infty$ such that $\cup_{k=1}^\infty D_k=D$,
and for every $k$, $t\in [0,k]$
$$
\sup_{x\in D_k} |b(t,x)|\leq M_k(t),\qquad \sup_{x\in D_k} |\sigma
(t,x)|^{2}
\leq M_{k}(t);
$$ 
 ($ii$) there exists a non-negative function $V\in
C^{1,2}(\Bbb R_+\times D)$  such that 
 $$ 
LV(t,x)\leq M(t )V(t,x),\,\,\,\forall t\geq 0,\,x\in D,
$$   
$$
 V_{k}(T):=\inf_{x\in \partial D_k,t\leq T}|V(t,x)|\to\infty
$$   as  $k\to\infty$ for every finite $T$, where $\partial D_k$
denotes the boundary of $D_k$ and $L$ is the differential operator
$$ L:=\frac{\partial}{\partial t}+\sum_i
b_i(t,x)D_{i} +\frac {1}{2}
\sum_{i,j} (\sigma\sigma^{ *})_{ij}(t,x) D_{ij}
\quad D_{i}=\frac{\partial}{\partial x^i},\quad D_{ij}=D_{i}D_{j}, 
$$
 where $\sigma^{ *}$ denotes the transpose of the matrix $\sigma$; 
 
($iii$) $P(\xi \in D)=1$.

Note that by $(i)$ and by our definition of $\sigma$ and $b$ outside
$D$,
 Euler's approximations $x_{n}(t)$ are well defined for all $t\geq0$.

\begin{definition}
                                       \label{definition 6.4.2.1}

 By solution of equation \eqref{6.4.2.1} we mean an
${\cF}_{t}$--adapted process
$x(t)$ which does not ever leave $D$ and satisfies (2.1).
\end{definition}

An explanation of the definition can be found in the following
statement.

\begin{lemma}
                                           \label{lemma 6.4.2.2}
  Let $x(t)$ be an ${\cF}_{t}$--adapted
process defined for all $t\geq0$. Assume that
$x(t)$ satisfies (2.1) for $t<\tau:=\inf\{t:\,x(t)\not\in D\}$, and
assume (i)--(iii). Then $\tau=\infty$ (a.s.).
\end{lemma}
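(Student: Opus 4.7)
The plan is to use the Lyapunov-type function $V$ from assumption $(ii)$ as a barrier that prevents trajectories from escaping $D$. Introduce the stopping times $\tau_k:=\inf\{t\ge 0:x(t)\notin D_k\}$, which increase in $k$ and satisfy $\tau_k\le\tau$ since $D_k\subset D$. By continuity of $x$, on $\{\tau_k<\infty\}$ the process attains $\partial D_k$ at time $\tau_k$. Since $\tau\ge\tau_k$ for every $k$, it suffices to show $\tau_k\to\infty$ a.s., which I would obtain by fixing $T>0$ and proving $P(\tau_k\le T)\to 0$ as $k\to\infty$.

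I would then apply It\^o's formula to $U(t,x):=e^{-\int_0^t M(s)\,ds}V(t,x)$ along $x(\cdot)$, stopped at $t\wedge\tau_k$. For $s\le\tau_k$ the path stays in $\overline{D}_k\subset D$, so by $(i)$ together with the $C^{1,2}$ regularity of $V$ the coefficients $\sigma,b$ and the derivatives $\partial_tV,\nabla V,D^2V$ are bounded on $[0,T]\times\overline{D}_k$; hence the stochastic-integral part is a genuine square-integrable martingale, not merely a local one. The drift of $U(t,x(t))$ equals $e^{-\int_0^t M\,ds}\bigl(LV-MV\bigr)(t,x(t))\le 0$ by the hypothesis $LV\le MV$, so $U(\cdot\wedge\tau_k,x(\cdot\wedge\tau_k))$ is a nonnegative supermartingale.

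To deal with the possibly unbounded initial value $V(0,\xi)$, localize on the $\cF_0$-event $\Omega_m:=\{\xi\in D_m\}$, which has $P(\Omega_m)\uparrow 1$ by $(iii)$. Multiplying the supermartingale inequality by $\mathbf 1_{\Omega_m}$ and taking expectations yields
\begin{equation*}
E\bigl[\mathbf 1_{\Omega_m}e^{-\int_0^{\tau_k\wedge T}M(s)\,ds}V(\tau_k\wedge T,x(\tau_k\wedge T))\bigr]\le E\bigl[\mathbf 1_{\Omega_m}V(0,\xi)\bigr]\le C_m,
\end{equation*}
where $C_m:=\sup_{x\in\overline{D}_m}V(0,x)<\infty$. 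Restricting the left side to $\{\tau_k\le T\}$ and using the bounds $V(\tau_k,x(\tau_k))\ge V_k(T)$ and $e^{-\int_0^{\tau_k\wedge T}M}\ge e^{-\int_0^T M}$ gives
\begin{equation*}
P(\tau_k\le T,\Omega_m)\le\frac{C_m\,e^{\int_0^T M(s)\,ds}}{V_k(T)},
\end{equation*}
which tends to $0$ as $k\to\infty$ by the blow-up condition in $(ii)$. Sending $m\to\infty$ (by $(iii)$) and then $T\to\infty$ completes the argument.

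The main subtlety I expect is justifying that the stopped process $U(\cdot\wedge\tau_k,x(\cdot\wedge\tau_k))$ is a true supermartingale rather than merely a local one, since that is what legitimizes the expectation step. This rests on the compactness of $\overline{D}_k$ inside $D$ (so that $V$ and its derivatives are bounded on $[0,T]\times\overline{D}_k$) combined with the boundedness of $\sigma$ and $b$ on $D_k$ for $t\le k$ furnished by $(i)$.
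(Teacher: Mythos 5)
Your proof is correct and follows essentially the same route as the paper: apply It\^o's formula to the discounted Lyapunov function $e^{-\int_0^t M(s)\,ds}V(t,x(t))$ stopped at the exit time from $D_k$, use $LV\le MV$ to get a nonnegative supermartingale, and let the blow-up of $V$ on $\partial D_k$ force $P(\tau_k\le T)\to 0$. The only (minor) difference is how the possibly large $\cF_0$-term $V(0,\xi)$ is handled -- you localize on $\{\xi\in D_m\}$ and apply Chebyshev at the terminal time, whereas the paper absorbs it into the weight $\gamma(t)=\exp[-\int_0^t M\,ds-V(0,\xi)]$ and uses a maximal inequality, splitting off the events $\{\xi\notin D_k\}$ and $\{V(0,\xi)\ge\log(1/\delta)\}$; both devices are equivalent in effect.
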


\begin{proof}  Define $\tau^{k}$ as the first exit time of $x(t)$ from
$D_{k}$. Obviously $\tau^{k}\uparrow\tau$. Therefore to prove the
lemma it suffices to show that for any $k$ and $\delta,T>0$ we have
\begin{equation}
                                               \label{6.4.2.3}
P(\tau^{k}\leq T)\leq P(\xi\not\in
D_{k})+P(V(0,\xi)\geq\log(1/\delta)) +{1\over \delta
V_{k}(T)}\exp\int_{0}^{T}M(t)\,dt.      
\end{equation}

Apply It\^o's formula   to $\gamma (t)V(t,x (t))$ where
$$
\gamma (t):=\exp \Big[-\int_0^t M(s)\,ds-V(0,\xi)\Big],
$$ and     use  assumption ($ii$). Then it follows that for all $t$
$$
\gamma (t)V(t\wedge\tau^k,x (t\wedge\tau^k))I_{\tau^k >0}\leq
\gamma (0)V(0,\xi) +m^k(t),
$$ 
 where $m^k(t)$ is a continuous local martingale starting from 0.
Hence  for any $R>0$
$$ P\{\sup_{t\leq \tau^k}\gamma (t)V(t,x^k(t))I_{\tau^k >0}\geq R\}
\leq \frac{1}{R}E(\gamma (0)V(0,\xi))\leq \frac{1}{R},
$$ 
and this gives \eqref{6.4.2.3} almost immediately. The lemma is proved. 
\end{proof}

In order to state our main results we need one more notion.

\begin{definition}
                                            \label{definition 2.3}
 We say that the pathwise uniqueness holds
for equation \eqref{6.4.2.1} if for any stochastic basis carrying a
$d_{1}$-dimensional Wiener process
$w'(\cdot)$ and a random variable $\xi'$ such that the joint
distribution of
$(w'(\cdot),\xi')$ is the same as that of the given $(w(\cdot),\xi)$,
equation \eqref{6.4.2.1} with $w'(t),\xi'$ in place of $w(t),\xi$ cannot have
more than one solution.
\end{definition}

\begin{theorem}
                                            \label{Theorem 2.4} 
Assume (i)--(iii). Suppose moreover that $b$
and 
$\sigma$ are continuous in $x\in D$ and that for equation \eqref{6.4.2.1} the
pathwise uniqueness holds. Then $x_n(t)$ converges in probability to
a process $x(t)$, uniformly in $t$ in bounded intervals, and 
$x(t)$ is the  unique solution of equation \eqref{6.4.2.1}. Furthermore, $x(t)$
is
$\cF^{w}_{t}\vee\sigma(\xi)$--adapted.
\end{theorem}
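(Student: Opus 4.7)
The plan is to apply Lemma \ref{lemma 6.4.1} with the Polish space $C([0,T];\bR^d)$, reducing the theorem to showing that every pair of subsequences of $\{x_n\}$ has a further subsequence whose joint law converges weakly to a measure supported on the diagonal.

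\textbf{Step 1: Tightness.} Using the exhaustion $\{D_k\}$ from (i), let $\tau_n^k := \inf\{t \geq 0 : x_n(t) \notin D_k\}$. On $[0, \tau_n^k \wedge k]$ both $|b|$ and $|\sigma|^2$ are bounded by $M_k$, so standard Kolmogorov moment estimates yield tightness of the stopped processes $\{x_n(\cdot \wedge \tau_n^k)\}_n$ in $C([0,T];\bR^d)$. Applying It\^o's formula to $\gamma(t) V(t, x_n(t \wedge \tau_n^k))$ with $\gamma(t) := \exp(-\int_0^t M(s)\,ds - V(0,\xi))$, exactly as in the proof of Lemma \ref{lemma 6.4.2.2}, bounds $P(\tau_n^k \leq T)$ by a quantity independent of $n$ that vanishes as $k \to \infty$ thanks to (ii). Combining these gives tightness of $\{x_n\}_n$ in $C([0,T];\bR^d)$ for every $T$.

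\textbf{Step 2: Skorokhod and limit identification.} Fix subsequences $x_{l(k)}$ and $x_{m(k)}$. By tightness, the quadruples $(x_{l(k)}, x_{m(k)}, w, \xi)$ admit a further subsequence converging weakly in $C \times C \times C \times \bR^d$; Skorokhod's theorem provides copies $(\tilde x^1_k, \tilde x^2_k, \tilde w_k, \tilde \xi_k)$ on a new basis with matching joint laws converging almost surely to some $(\tilde x^1, \tilde x^2, \tilde w, \tilde \xi)$. Since equation \eqref{6.4.2.2} is an a.s.\ identity expressible as a measurable functional of its data, the Euler equations transfer to the new space. The joint law of $(\tilde w, \tilde \xi)$ equals that of $(w, \xi)$, so $\tilde w$ is a Wiener process in a suitable completed filtration; continuity of $b(t,\cdot)$ and $\sigma(t,\cdot)$ together with the uniform bounds from (i) allow passage to the limit in the drift by dominated convergence and in the stochastic integral by standard $L^2$-stability, localized at the exit times from $D_k$. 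Consequently both $\tilde x^1$ and $\tilde x^2$ solve \eqref{6.4.2.1} on the new basis with the common driver $\tilde w$ and initial value $\tilde \xi$.

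\textbf{Step 3: Conclusion.} Pathwise uniqueness (Definition \ref{definition 2.3}) forces $\tilde x^1 = \tilde x^2$ almost surely, so the weak limit of $(x_{l(k)}, x_{m(k)})$ is concentrated on the diagonal. Lemma \ref{lemma 6.4.1} then yields convergence $x_n \to x$ in probability, uniformly in $t$ on bounded intervals. Repeating the Skorokhod-and-pass-to-the-limit argument for the single sequence $\{x_n\}$ identifies $x$ as a solution of \eqref{6.4.2.1}, which is unique by hypothesis. Each $x_n$ is $\cF_t^w \vee \sigma(\xi)$-adapted by its recursive construction from $w$ and $\xi$ between grid points, and convergence in probability of adapted processes yields an adapted limit after passing to a suitable modification, establishing the last assertion. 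The main obstacle is the passage to the limit in the stochastic integral on the Skorokhod space: verifying that $\tilde w$ remains a Wiener process relative to a filtration to which the approximating integrands are adapted, and obtaining the appropriate $L^2$-convergence of these integrands, is where both the continuity of $\sigma$ in $x$ and the localization afforded by (i)--(ii) are essential.
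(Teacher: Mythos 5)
There is a genuine gap in your Step 1. You claim that applying It\^o's formula to $\gamma(t)V(t,x_n(t\wedge\tau_n^k))$ ``exactly as in the proof of Lemma \ref{lemma 6.4.2.2}'' bounds $P(\tau_n^k\leq T)$ by a quantity independent of $n$ vanishing as $k\to\infty$. But Lemma \ref{lemma 6.4.2.2} and estimate \eqref{6.4.2.3} use that the process solves \eqref{6.4.2.1}, i.e.\ that the drift and diffusion in It\^o's formula are evaluated at the \emph{current} point $x(t)$, so that assumption (ii), $LV\leq MV$, makes $\gamma V$ a local supermartingale. The Euler process solves \eqref{6.4.2.2}: its coefficients are frozen at $x_n(\kappa_n(t))$, so It\^o's formula produces the ``mismatched'' expression $b_i(t,x_n(\kappa_n(t)))D_iV(t,x_n(t))+\tfrac12(\sigma\sigma^*)_{ij}(t,x_n(\kappa_n(t)))D_{ij}V(t,x_n(t))$, which is not controlled by $M(t)V(t,x_n(t))$ under (ii). Hence the supermartingale argument does not apply to $x_n$, and no bound on $P(\tau_n^k\leq T)$ uniform in $n$ comes out ``exactly as in Lemma \ref{lemma 6.4.2.2}''. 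Repairing this by estimating the frozen-coefficient error would require some uniform-in-$t$ modulus of continuity of $b(t,\cdot),\sigma(t,\cdot)$ and control of $DV,D^2V$, none of which is assumed (the coefficients are only Borel in $t$ with locally integrable bounds $M_k(t)$), and in any case could at best give a bound on $\limsup_n$, not a bound independent of $n$.

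This is precisely where the paper's proof is structured differently: for each fixed $k$ it first applies Skorokhod's embedding to the stopped processes $x_n^k$, uses Lemma \ref{Lemma 3.1} (which needs the continuity of $b,\sigma$ in $x$) to identify the a.s.\ limit $\tilde x^k$ as a solution of \eqref{6.4.2.1} up to its exit time $\tilde\tau^k$ from $D_k$, applies estimate \eqref{6.4.2.3} to $\tilde\tau^k$ (the limit, not the approximations), and then transfers the bound back through $\liminf_j\tilde\tau^k_{n(j)}\geq\tilde\tau^k$ and equality of the laws of $\tau^k_{n(j)}$ and $\tilde\tau^k_{n(j)}$, with the arbitrariness of the subsequence yielding \eqref{3.2}. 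Your Steps 2 and 3 follow the paper's scheme (Skorokhod on pairs, identification of both limits as solutions driven by the same Wiener process, pathwise uniqueness, Lemma \ref{lemma 6.4.1}), though the passage to the limit in the stochastic integrals with varying integrands and varying Wiener processes, which you flag as the main obstacle, is exactly Skorokhod's Lemma \ref{Lemma 3.1} and should be invoked rather than ``standard $L^2$-stability''. As written, however, the tightness step — the nontrivial part of the argument that forces the detour through the limit process — is not established.
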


\begin{remark}
                                               \label{Remark 2.5} 
 Note that taking $V(t,x):=(|x|^2+1)\exp
(-\int\limits_{0}^{t}M(s)\,ds)$  in the case $D=\Bbb R^d$, 
$D_k:=\{x\in \Bbb R^d: \,|x|<k\}$,  conditions ($i$)--($ii$) can be
restated as follows:
\begin{itemize}
\item $\sup_{|x|<k}\{|b(t,x)|+|\sigma(t,x)|^{2}\} \leq M_k(t)$  for
every $t\geq 0$ and   integer $k\geq1$;
\item $2(x,b(t,x))+\|\sigma (t,x)\|^2\leq M(t)(|x|^2+1)$  for every
$t\geq 0$ and $x\in \Bbb R^d$,
\end{itemize}
 where $\|\alpha\|$ denotes the Hilbert--Schmidt norm for
matrices $\alpha$ and $(x,y)$ is the scalar product of $x,y\in\bR^{d}$.
\end{remark}

We say that the coefficients $b$, $\sigma$ satisfy the monotonicity
condition on $D$ if for every $k$ and $t\geq 0$, $x,\, y \in D_k$ we
have
$$ 2(x-y,b(t,x)-b(t,y))+\|\sigma (t,x)-\sigma (t,y)\|^2\leq
M_k(t)|x-y|^2.
$$

\begin{corollary}[c.f. \cite{6.}]
                                         \label{Corollary 2.6}
 Assume (i)--(iii) and let
the coefficients $b$, $\sigma$ satisfy the monotonicity condition on
$D$. Or in case $D=\Bbb{R}^{d}$ we may assume that the conditions 1
and 2 from Remark 2.5 are satisfied and that the monotonicity
condition is satisfied for $D_{k}=
\{x\in \Bbb R^d: \,|x|<k\}$.
 Assume moreover  that $b$ is continuous  in $x\in D$. Then the
conclusions of Theorem \ref{Theorem 2.4} hold.
\end{corollary}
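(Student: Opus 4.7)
The plan is to reduce to Theorem \ref{Theorem 2.4}. Hypotheses (i)--(iii) and the continuity of $b$ in $x$ are given, so it suffices to establish two things: continuity of $\sigma$ in $x\in D$, and pathwise uniqueness for \eqref{6.4.2.1}. In the case $D=\bR^d$, conditions 1 and 2 from Remark \ref{Remark 2.5} are just (i) and (ii) restated with the specific choice $V(t,x)=(|x|^2+1)\exp(-\int_0^t M(s)\,ds)$ and $D_k=\{|x|<k\}$, so both settings are treated in parallel.

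For the continuity of $\sigma$, rearrange the monotonicity condition on $D_k$ as
\begin{equation*}
\|\sigma(t,x)-\sigma(t,y)\|^2\le M_k(t)|x-y|^2-2(x-y,b(t,x)-b(t,y)).
\end{equation*}
Fix $x\in D_k$ and let $y\to x$; by continuity of $b$ the right-hand side tends to $0$, so $\sigma(t,\cdot)$ is continuous at $x$. Since the $D_k$ exhaust $D$, $\sigma(t,\cdot)$ is continuous on all of $D$.

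For pathwise uniqueness, let $x(t)$ and $y(t)$ be two solutions on the same stochastic basis driven by the same $(w,\xi)$. Both never leave $D$ by Definition \ref{definition 6.4.2.1}, and Lemma \ref{lemma 6.4.2.2} ensures that the stopping times $\tau_k:=\inf\{t\ge 0: x(t)\notin D_k\text{ or }y(t)\notin D_k\}$ tend to infinity a.s. Apply It\^o's formula to $\gamma_k(t)|x(t)-y(t)|^2$ with $\gamma_k(t):=\exp(-\int_0^t M_k(s)\,ds)$. On $\{t<\tau_k\}$, the monotonicity condition makes the drift nonpositive:
\begin{equation*}
\gamma_k(t)\bigl[2(x-y,b(t,x)-b(t,y))+\|\sigma(t,x)-\sigma(t,y)\|^2-M_k(t)|x-y|^2\bigr]\le 0.
\end{equation*}
Stopping at $\tau_k$ to tame the local martingale part and taking expectations yields $E\,\gamma_k(t\wedge\tau_k)|x(t\wedge\tau_k)-y(t\wedge\tau_k)|^2\le 0$, so $x\equiv y$ on $[0,\tau_k]$ almost surely. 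Letting $k\to\infty$ gives pathwise uniqueness.

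With continuity of $\sigma$ and pathwise uniqueness established, Theorem \ref{Theorem 2.4} applies directly and all its conclusions carry over. The argument involves no genuine obstacle; the only mild care needed is the localization by $\tau_k$, since the monotonicity bound is only assumed on $D_k$.
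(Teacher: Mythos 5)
Your proposal is correct and follows essentially the same route as the paper: reduce to Theorem \ref{Theorem 2.4} by noting that the monotonicity condition yields pathwise uniqueness (the paper simply cites Krylov \cite{7.} for this standard localized It\^o/Gronwall argument, which you spell out). You also make explicit the point, left implicit in the paper, that monotonicity together with the local bound and continuity of $b$ forces continuity of $\sigma$ in $x$, so that Theorem \ref{Theorem 2.4} is indeed applicable.
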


 \begin{proof} One can easily show that the monotonicity condition
implies the  pathwise uniqueness (see e.g. Krylov \cite{7.}). Hence this
corollary is immediate  from Theorem \ref{Theorem 2.4}.
\end{proof}

In the one-dimensional case (i.e. when $d=1$)  we have the following
result.

\begin{corollary}
                                             \label{Corollary 2.7}
 Let $d=1$.  Assume (i)--(iii) and let $b$ be
continuous in
$x$ in $D$ for any $t$. Assume moreover that for every $k$  and
$t\geq 0,\,\,x\,,y\in  D_k$ we have
$$ (x-y,b(t,x)-b(t,y))\leq M_k(t)|x-y|^2,\quad |\sigma(t,x)-\sigma
(t,y)|^{2}\leq M_k(t) \rho_{k} (|x-y|),
$$ 
 where   $\rho_k\geq0$ is an increasing  function on $\Bbb R_+$ such that
$$
\int_0^\varepsilon 1/\rho_k(r)\,dr=\infty          
$$ for some $\varepsilon> 0$. Then the conclusions of Theorem \ref{Theorem 2.4}
hold.
\end{corollary}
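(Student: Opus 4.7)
The plan is to reduce to Theorem \ref{Theorem 2.4} by verifying its two non-trivial hypotheses: continuity of the coefficients in $x$ and pathwise uniqueness for equation \eqref{6.4.2.1}. Continuity of $b$ is assumed outright. Continuity of $\sigma(t,\cdot)$ on $D$ comes for free from the structural hypothesis on $\rho_k$: since $\rho_k$ is increasing and nonnegative with $\int_0^\varepsilon dr/\rho_k(r)=\infty$, one must have $\rho_k(0+)=0$ (otherwise $1/\rho_k$ would be bounded near the origin and the integral finite), and then the inequality $|\sigma(t,x)-\sigma(t,y)|^2\leq M_k(t)\rho_k(|x-y|)$ gives uniform continuity of $\sigma(t,\cdot)$ on each $D_k$.

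For pathwise uniqueness I would run the classical Yamada--Watanabe argument. Let $x_1,x_2$ be two solutions on a common basis driven by $w'$ with initial value $\xi'$, set $z:=x_1-x_2$, and let $\tau_k$ denote the first exit time of the pair $(x_1,x_2)$ from $D_k\times D_k$; by Lemma \ref{lemma 6.4.2.2} applied to each solution, $\tau_k\uparrow\infty$ a.s. Fix $k$ and choose a sequence $a_n\downarrow 0$ with $a_0=1$ and $\int_{a_n}^{a_{n-1}}du/\rho_k(u)=n$, which is possible precisely because the integral diverges at $0$. Pick a continuous $\psi_n\geq 0$ supported in $[a_n,a_{n-1}]$ with $\int\psi_n=1$ and $\psi_n(u)\leq 2/(n\rho_k(u))$, and define
$$
\varphi_n(x):=\int_0^{|x|}\!\!\int_0^y\psi_n(u)\,du\,dy,\qquad x\in\bR.
$$
Then $\varphi_n\in C^2(\bR)$, $\varphi_n(x)\uparrow|x|$, $\varphi_n''(x)=\psi_n(|x|)$, and $\varphi_n'(z)=g_n(|z|)\sign z$ with $0\leq g_n\leq 1$.

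Applying It\^o's formula to $\varphi_n(z(t\wedge\tau_k))$ and taking expectations (stopping further if needed to control the martingale part, whose integrand is bounded on $[0,\tau_k]$ thanks to (i)), the second-order contribution is at most
$$
\tfrac12 E\!\int_0^{t\wedge\tau_k}\!\psi_n(|z|)M_k(s)\rho_k(|z|)\,ds\;\leq\;\frac{1}{n}\int_0^t M_k(s)\,ds,
$$
which tends to $0$ as $n\to\infty$. For the drift contribution, the key identity
$$
\varphi_n'(z)\bigl(b(s,x_1)-b(s,x_2)\bigr)=g_n(|z|)\,\frac{(x_1-x_2)(b(s,x_1)-b(s,x_2))}{|z|}\leq g_n(|z|)M_k(s)|z|\leq M_k(s)|z|
$$
uses the one-sided Lipschitz hypothesis on $b$ to absorb the sign issue. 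Monotone convergence then yields
$$
E|z(t\wedge\tau_k)|\leq\int_0^t M_k(s)\,E|z(s\wedge\tau_k)|\,ds,
$$
so Gronwall forces $z\equiv 0$ on $[0,\tau_k]$, and letting $k\to\infty$ gives pathwise uniqueness. Theorem \ref{Theorem 2.4} then delivers the conclusion.

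The main technical obstacle is the construction of the $\varphi_n$ together with the algebra that converts $\varphi_n'(z)(b_1-b_2)$ into something controlled by the \emph{one-sided} Lipschitz bound on $b$; once this is done, the divergence $\int_0^\varepsilon dr/\rho_k(r)=\infty$ feeds directly into the diffusion estimate, and the remainder is standard stopping-time and Gronwall machinery.
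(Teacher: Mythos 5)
Your proof is correct and follows essentially the same route as the paper: both reduce Corollary \ref{Corollary 2.7} to Theorem \ref{Theorem 2.4} by establishing pathwise uniqueness up to the exit time from each $D_k$ via a (modified) Yamada--Watanabe argument. The only difference is cosmetic: the paper first makes a nonrandom time change to reduce to $M_k\equiv 1$ and then invokes the Yamada--Watanabe method, whereas you keep the locally integrable $M_k(t)$ and absorb it directly in the It\^o/Gronwall computation.
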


\begin{proof} For any given $k=1,2,...$ we can make  a nonrandom time
change which reduces the general case to the case $M_{k}\equiv1$. In
this case one can see by a straightforward
 modification of the well--known method from Yamada and Watanabe \cite{15.}
(see also Ikeda and Watanabe \cite{4.}) that  the above conditions  imply
the pathwise uniqueness for solutions of equation \eqref{6.4.2.1} until they
leave $D_{k}$. Of course, after this we see that even without time
change we have the pathwise uniqueness for solutions until they leave
$D_{k}$. Since this is true for any $k$ we have the pathwise
uniqueness in $D$, and this is the only thing we need to apply
Theorem \ref{Theorem 2.4}.
\end{proof}

If we are dealing with nondegenerate equations, the continuity
condition on $b$ in Theorem \ref{Theorem 2.4} can be dropped. To state this more
precisely, in addition to the conditions ($i$)--($iii$) let us
introduce the following non-degeneracy  condition on the diffusion
coefficient $\sigma$:

($iv$) For every $k$ the domain $D_k$ is    bounded and
convex,   and 
$$
\sum _{i,j}{(\sigma\sigma^{ *})_{ij}(t,x)}{\lambda}^i {\lambda}^j \geq
\varepsilon_k  M_k(t)\sum_i{|\lambda^i|}^2
$$ for every $t\in [0,k]$, $x\in D_k$, $ \lambda  \in \Bbb R$,
where $\varepsilon_k >0$ are  some constants.

We say that a function $f$ on $\Bbb R_+\times D$ is locally H\"older 
in $x$ in $D$ (with exponent 
$\alpha\in(0,1]$) if    for every $k$  and $t\geq 0$, $x\,, y\in D_k$
$$ |f(t,x)-f(t,y)|^2\leq M_k(t)|x-y|^{2\alpha}.
$$ 
 If $\alpha=1$, then we say that $f$ is  locally Lipschitz in $x$ in
$D$. 

\begin{theorem} 
                                            \label{Theorem 2.8}
  Assume (i)--(iv) and suppose that $\sigma$ 
is locally H\"older in $x$ in $D$  with some exponent
$\alpha\in(0,1]$. In the case $\alpha\not=1$ assume in addition
 that the pathwise uniqueness  holds for equation \eqref{6.4.2.1}.   Then
Euler's approximations $x_n(t)$ converge to a  process $x(t)$ in
probability, uniformly in $t$ in bounded intervals, and
$x(t)$ is the unique solution of equation \eqref{6.4.2.1}. Furthermore, $x(t)$
is
$\cF^{w}_{t}\vee\sigma(\xi)$--adapted.
\end{theorem}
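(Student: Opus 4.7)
\textbf{Strategy via Lemma \ref{lemma 6.4.1}.} As for Theorem \ref{Theorem 2.4}, my plan is to apply Lemma \ref{lemma 6.4.1} in the Polish space $C([0,\infty),\bR^d)$ with the topology of uniform convergence on compacts. Given any two subsequences $x_l$ and $x_m$ of Euler's approximations, I need to extract a further subsequence of the pairs $(x_{l(k)},x_{m(k)})$ that converges weakly to a random element concentrated on the diagonal. To manage the fact that the approximations take values in $\bR^d$ while solutions must stay in $D$, I would first localize with the exit times $\tau_n^k=\inf\{t\geq 0: x_n(t)\notin D_k\}$, reducing the problem to convergence of the stopped processes on each $D_k$ and invoking Lemma \ref{lemma 6.4.2.2} together with assumption ($ii$) at the end to patch the pieces into a global solution.

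\textbf{Tightness and Skorokhod embedding.} On each $D_k$ the coefficients are bounded by ($i$), so standard Burkholder--Davis--Gundy and Chebyshev estimates yield uniform H\"older moments of the stopped Euler paths and hence tightness of the joint laws of $(x_l^{\cdot\wedge\tau_l^k},x_m^{\cdot\wedge\tau_m^k},w,\xi)$ in the product Polish space. Skorokhod's embedding then provides, on a new probability space, a subsequence $(\tilde x_k,\tilde y_k,\tilde w_k,\tilde\xi_k)$ with the same finite dimensional distributions as the original subsequence, converging almost surely to some $(\tilde x,\tilde y,\tilde w,\tilde\xi)$. A routine argument shows that $\tilde w_k$ (and therefore $\tilde w$) is a Wiener process with respect to a suitable filtration, and that $\tilde x_k,\tilde y_k$ satisfy the Euler scheme \eqref{6.4.2.2} driven by $\tilde w_k$ with initial value $\tilde\xi_k$.

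\textbf{Passing to the limit: the main obstacle.} This is the step where the proof departs from that of Theorem \ref{Theorem 2.4}. The stochastic integral is not the issue: local H\"older continuity of $\sigma$ with exponent $\alpha$, together with standard moment bounds on $|\tilde x_k(t)-\tilde x_k(\kappa_k(t))|$ on $D_k$, suffices to identify the limiting It\^o integral in $L^2$. The genuine difficulty is the drift, since $b$ is only Borel measurable, and continuity of $b$ cannot be used as it was in Theorem \ref{Theorem 2.4}. Here the non-degeneracy assumption ($iv$) enters decisively: together with Theorem 4.2 it furnishes a Krylov-type estimate
$$
E\int_0^{T\wedge\tau_n^k}|f(s,x_n(s))|\,ds\leq N_k(T)\|f\|_{L_{d+1}([0,T]\times D_k)},
$$
uniform in $n$. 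Mollifying $b$ on $D_k$ by continuous $b_\varepsilon$ and applying this bound both to the prelimit $\tilde x_k$ and, via Fatou, to the limit $\tilde x$, I can interchange limits with the drift integral and conclude that $\int_0^{\cdot\wedge\tau^k}b(s,\tilde x_k(\kappa_k(s)))\,ds\to\int_0^{\cdot\wedge\tau^k}b(s,\tilde x(s))\,ds$ in probability, and similarly for $\tilde y$.

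\textbf{Conclusion.} At this point $\tilde x$ and $\tilde y$ are solutions of equation \eqref{6.4.2.1} on the new space, driven by the common Wiener process $\tilde w$ and starting from $\tilde\xi\in D$ (by ($iii$)). Pathwise uniqueness---assumed directly when $\alpha<1$ and following from Veretennikov's theorem when $\alpha=1$, since $\sigma$ is then Lipschitz, $\sigma\sigma^{*}$ is uniformly elliptic by ($iv$), and $b$ is locally bounded---forces $\tilde x=\tilde y$ almost surely on $[0,\tau^k]$, and letting $k\to\infty$ extends this to all $t$. Thus any subsequential weak limit of $(x_{l(k)},x_{m(k)})$ sits on the diagonal, and Lemma \ref{lemma 6.4.1} yields convergence of $x_n$ in probability, uniformly on bounded intervals, to a limit $x(t)$. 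Repeating the passage-to-the-limit on the original space shows that $x(t)$ solves \eqref{6.4.2.1}, and the $\cF^{w}_{t}\vee\sigma(\xi)$-adaptedness is then obtained by the standard Yamada--Watanabe argument from pathwise uniqueness.
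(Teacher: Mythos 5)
Your overall route is the same as the paper's: localize with the exit times from $D_k$, prove tightness of the stopped approximations, apply Skorokhod's embedding to pairs of subsequences, pass to the limit in the drift integral by approximating the merely Borel $b$ with continuous functions while using a density/Krylov-type estimate for the Euler scheme (uniform in $n$) to control the error, treat the stochastic integral through the continuity of $\sigma$ (this is Lemma \ref{Lemma 3.1}), and then conclude with pathwise uniqueness (assumed when $\alpha\neq1$, from Veretennikov when $\alpha=1$) and Lemma \ref{lemma 6.4.1}. The paper packages exactly this passage to the limit as Lemma \ref{Lemma 5.1}, proved via Corollary \ref{Corollary 4.3}.

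The one quantitative claim you make, however, is not what Theorem \ref{Theorem 4.2} delivers. The density bound \eqref{4.3} holds for $q<d/(d-\alpha)$, so by duality it yields
$$
E\int_0^{T}|f(s,x_n(\kappa_n(s)))|\,I_{s<\tau_n^k}\,ds\leq N_k(T)\,\sup_{s\le T}\|f(s,\cdot)\|_{L_p(D_k)}\quad\text{with } p>\tfrac{d}{\alpha},
$$
and the paper in fact uses it in the form involving $\bigl[\int_0^T\int_{D_k}|f-g|^{2p}\,dx\,ds\bigr]^{1/p}$. Your stated estimate with the $L_{d+1}([0,T]\times D_k)$ norm is not available uniformly in $n$ from Theorem \ref{Theorem 4.2} when $\alpha$ is small: for $\alpha\le d/(d+1)$ one has $d/\alpha\ge d+1$, so the admissible spatial exponent is strictly larger than $d+1$; and you cannot invoke the genuine Krylov estimate for It\^o processes, because $x_n(\kappa_n(\cdot))$ is not such a process (Remark \ref{Remark 4.4} shows how fragile these bounds are). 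The fix is exactly the paper's: use \eqref{4.3}/Corollary \ref{Corollary 4.3} with $p>d/\alpha$. Note also that passing from Theorem \ref{Theorem 4.2} (which concerns the driftless scheme \eqref{6.17.1} with a fixed starting point and globally H\"older, nondegenerate $\sigma$) to the localized estimate for the actual approximations with drift requires the reductions carried out in Corollary \ref{Corollary 4.3} (nonrandom time change to $M_k\equiv1$, redefinition of the coefficients outside $D_k$, Girsanov's theorem to remove the drift, and conditioning on the initial value); this should not be treated as automatic. Finally, the adaptedness of the limit is immediate because each $x_n$ is $\cF^{w}_{t}\vee\sigma(\xi)$-adapted and convergence in probability preserves this; no Yamada--Watanabe argument is needed. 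With these corrections your argument coincides with the paper's proof.
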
 

In the one-dimensional case one can state a condition on pathwise
uniqueness differently. 

\begin{corollary}
                                             \label{Corollary 2.9} 
Let $d=1$ and assume (i)--(iv). Suppose
that $\sigma$ is locally H\"older in $x$ in $D$
 with some exponent $\alpha\in(0,1]$. Assume moreover that for every
$k$
$$ |\sigma(t,x)-\sigma(t,y)|^2\leq M_k(t)(\rho_k (|x-y|)
+|v_k(x)-v_k(y)|)
$$ for every $t\geq 0$, $x,y\in D_k$, where $v_k$ is a real-valued function
of  locally bounded variation and $\rho_k$ is an increasing
continuous function satisfying
$$
\int_0^\varepsilon 1/(r\vee\rho_k(r))\,dr=\infty          
$$ for some $\varepsilon> 0$. Then the conclusions of Theorem \ref{Theorem 2.8} 
hold.
\end{corollary}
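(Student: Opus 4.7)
By Theorem \ref{Theorem 2.8} the conclusion is immediate when $\alpha=1$; when $\alpha\in(0,1)$ the theorem requires additionally that pathwise uniqueness hold for \eqref{6.4.2.1}, which is all that remains to be checked. So let $x_{1},x_{2}$ be two solutions on the same basis with the same Wiener process and same initial condition, set $Z_{t}:=x_{1}(t)-x_{2}(t)$, and let $\tau_{k}$ be the first exit time of either $x_{i}$ from $D_{k}$; it suffices to show $Z_{t\wedge\tau_{k}}\equiv 0$ for every $k$. After a deterministic time change I may assume $M_{k}(t)\equiv 1$ on $[0,\tau_{k}]$.

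The plan is to combine the classical Yamada--Watanabe smoothing, which absorbs the $\rho_{k}$-term of the $\sigma$ bound just as in Corollary \ref{Corollary 2.7}, with a Nakao--Le Gall type local-time argument for the bounded-variation part encoded by $v_{k}$. Concretely, I choose $a_{n}\downarrow 0$ with $\int_{a_{n}}^{a_{n-1}}(r\vee\rho_{k}(r))^{-1}dr=n$ (possible by the divergence hypothesis), pick smooth $\phi_{n}\ge 0$ supported in $(a_{n},a_{n-1})$ with $\int\phi_{n}=1$ and $\phi_{n}(r)\le 2n^{-1}(r\vee\rho_{k}(r))^{-1}$, and form the even $C^{2}$ function $\psi_{n}$ with $\psi_{n}(0)=0$ and $\psi_{n}''(x)=\phi_{n}(|x|)$; then $\psi_{n}(x)\uparrow|x|$ and $|\psi_{n}'|\le 1$. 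Applying It\^o's formula to $\psi_{n}(Z_{t\wedge\tau_{k}})$ and taking expectations, the $\rho_{k}$-contribution to the diffusion term is bounded by $2t/n$ and vanishes, while the $v_{k}$-contribution is estimated using
\begin{equation*}
|v_{k}(x_{1}(s))-v_{k}(x_{2}(s))|\le \int_{\bR}I_{[x_{1}(s)\wedge x_{2}(s),\,x_{1}(s)\vee x_{2}(s)]}(a)\,|dv_{k}|(da)
\end{equation*}
(where $|dv_{k}|$ is the total variation measure of $v_{k}$), Fubini, and Krylov-type bounds on the occupation-time density of $x_{i}$ available thanks to non-degeneracy (iv); the support property of $\phi_{n}$ then forces this contribution to vanish as $n\to\infty$. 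Passing to the limit produces a Tanaka-type identity
\begin{equation*}
E|Z_{t\wedge\tau_{k}}|=E\!\int_{0}^{t\wedge\tau_{k}}\sign(Z_{s})\bigl(b(s,x_{1}(s))-b(s,x_{2}(s))\bigr)ds
\end{equation*}
in which the local time of $Z$ at $0$ is absent.

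The main obstacle is closing this estimate: with no regularity on $b$ beyond $|b|\le M_{k}$, a direct Gronwall fails. Here non-degeneracy (iv) is essential a second time, either through a Zvonkin-type transformation $y_{i}:=u(t,x_{i})$ (with $u$ a $C^{1,2}$-diffeomorphism solving an associated linear parabolic PDE) that eliminates the drift and leaves an SDE for $y_{i}$ whose diffusion coefficient still has the $(\rho_{k},v_{k})$-structure, reducing the problem to the driftless Yamada--Watanabe--Nakao--Le Gall uniqueness theorem; or through mollifying $b$ by smooth $b^{\varepsilon}$ and using Krylov's $L^{d+1}$-estimates to pass to the limit in the drift term. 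Either route yields $E|Z_{t\wedge\tau_{k}}|=0$ and hence $x_{1}\equiv x_{2}$ on $[0,\tau_{k}]$; letting $k\to\infty$ verifies the pathwise uniqueness, and Theorem \ref{Theorem 2.8} delivers the corollary.
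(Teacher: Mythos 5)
Your reduction to pathwise uniqueness is the right frame (Theorem \ref{Theorem 2.8} needs it only for $\alpha\neq 1$, and the localization to $D_k$ plus the time change to $M_k\equiv 1$ is exactly how the paper proceeds), but where the paper simply invokes Veretennikov's one-dimensional pathwise uniqueness theorem \cite{14.} --- which is stated precisely for $|\sigma(t,x)-\sigma(t,y)|^2\le \rho(|x-y|)+|v(x)-v(y)|$ with $\int_0^\varepsilon (r\vee\rho(r))^{-1}dr=\infty$, nondegenerate $\sigma$ and merely bounded measurable $b$, generalizing \cite{15.} and \cite{10.} --- you attempt to reprove that theorem, and the two steps that constitute its actual content are not carried out. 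First, the $v_k$-term: the claim that Krylov-type occupation bounds plus the support of $\phi_n$ force $E\int_0^{t\wedge\tau_k}\phi_n(|Z_s|)\,|v_k(x_1(s))-v_k(x_2(s))|\,ds\to 0$ is unsubstantiated, and the natural implementation fails. Indeed, $\phi_n(|Z_s|)\le 2n^{-1}|Z_s|^{-1}$ and $|v_k(x_1)-v_k(x_2)|\le |dv_k|\big([x_1\wedge x_2,x_1\vee x_2]\big)$, and since $|Z_s|\ge |x_1(s)-a|$ on the event $x_1\wedge x_2\le a\le x_1\vee x_2$, Fubini and a one-dimensional occupation-density bound for $x_1$ give at best a bound of order $n^{-1}\big(1+\log(a_{n-1}/a_n)\big)\,\mathrm{Var}(v_k)$; but your normalization $\int_{a_n}^{a_{n-1}}(r\vee\rho_k(r))^{-1}dr=n$ together with $(r\vee\rho_k(r))^{-1}\le r^{-1}$ forces $\log(a_{n-1}/a_n)\ge n$, with no upper bound in general, so this quantity need not tend to zero. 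Handling the bounded-variation part is exactly the delicate point of Nakao's and Veretennikov's proofs and requires a different argument (local-time/transformation techniques exploiting the nondegeneracy), not the Yamada--Watanabe smoothing alone.

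Second, the drift: even granting the Tanaka-type identity, your two suggested closings are not proofs. Mollifying $b$ and using Krylov estimates gives $E|Z_{t\wedge\tau_k}|\le \epsilon(\varepsilon)+C_\varepsilon\int_0^t E|Z_{s\wedge\tau_k}|\,ds$ with $C_\varepsilon\to\infty$ as the mollification parameter $\varepsilon\to 0$, and Gronwall then yields $\epsilon(\varepsilon)e^{C_\varepsilon t}$, which does not go to zero; with merely measurable $b$ this route cannot close. The Zvonkin-transformation alternative is only named: you would need existence and diffeomorphism properties of $u$ (whose second derivatives are only in $L_p$, so It\^o's formula must be justified via Krylov's estimates), and the key structural claim that the transformed diffusion coefficient again satisfies a $(\rho,v)$-type bound is asserted, not verified. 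The corollary as the paper proves it is a citation: after the time change and localization of Corollary \ref{Corollary 2.7}, pathwise uniqueness up to the exit time from each $D_k$ follows from \cite{14.}, and Theorem \ref{Theorem 2.8} then gives the conclusion; your proposal would be repaired most directly by doing the same rather than re-deriving that uniqueness theorem.
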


\begin{proof} Using the result obtained in Veretennikov \cite{14.}  on
pathwise uniqueness for  stochastic It\^o's equations in one
dimension (which generalizes the  corresponding results in Yamada and
Watanabe \cite{15.}  and in Nakao \cite{10.}), we can repeat the argument from the
proof of Corollary 2.7.
\end{proof}

Finally we present a result on Euler's approximations for the equation
\begin{equation}                                                                      \label{6.21.2}
dx(t)=b(t,x(t))\,dt+\sigma(t,x(t))\,dw(t), \quad x(0)=\xi
\end{equation}
with locally unbounded drift $b=b(t,x)$ and bounded uniformly 
non-degenerate $\sigma=\sigma(t,x)$, 
which is H\"older continuous in $x$, where $\xi$ is and $\cF_0$-measurable random 
vector in $\bR^d$. 

We will be dealing with tamed Euler approximations for \eqref{6.21.2} 
defined as 
\begin{equation}                                                                      \label{6.21.3}
dx_n(t)=b_n(t,x_n(\kappa_n(t)))\,dt+\sigma(t,x_n(\kappa_n(t)))\,dw(t), \quad x_n(0)=\xi, 
\end{equation}
where $b_{n}$ are certain functions. 

To formulate our conditions, for $p,q\in[1,\infty]$ 
we introduce the notation 
$L_{p}=L_{p}(\bR^{d})$ for the usual space of Borel functions on $\bR^{d}$
summable to the power $p$ with norm $\|\cdot\|_{p}$ and use 
$
L_{p, q }(T)=L_{p, q }([0,T]\times\bR^d)$ for 
the space of Borel functions $f=f(t,x)$ on $[0,T]\times\bR^d$ such that 
$$
 \|f\| _{p, q ,T}
=\Big(\int_0^T \|f(t)\| _{p}^{ q }dt\Big)^{1/ q }<\infty
\quad\text{when $ p , q \in[1,\infty)$},    
$$
\begin{equation}                                                                      \label{notation 6.21.5}
\|f\|_{\infty, q ,T}=\Big(\int_0^T\sup_{\bR^d}|f(t,x)|^{ q }\,dt\Big)^{1/ q }<\infty
\quad \text{when $p=\infty$, $ q \in[1,\infty)$}
\end{equation}
and  $\|f\|_{p,\infty,T}=\lim_{ q \to\infty}\|f\|_{p, q ,T}<\infty$ 
when $ q =\infty$, $p\in[1,\infty]$. 
 
\begin{assumption}
                      \label{assumption 7.5.1}
(1) The diffusion coefficient $\sigma$ is  a Borel function on 
$[0,\infty)\times\bR^d$ such that for each $T\in[0,\infty)$ there are constants 
$\varepsilon>0$ , $K<\infty$ and $\alpha\in(0,1)$ such that 
\begin{equation}                                                                          {\label{condition 6.21.1}}
\varepsilon I\leq(\sigma\sigma^{*})(s,x)\leq KI,
\quad
|\sigma(s,x)-\sigma(s,y)|\leq K|x-y|^{\alpha}
\end{equation}
for $s\in[0,T]$ and $x,y\in\bR^d$. 

(2) For each $T>0$ we have $|b|\in L_{2p,2 q ,T}$ for some 
$ q \in(1,\infty)$ and $p\in(\tfrac{d}{\alpha}
,\infty)$,  
  such that
\begin{equation}  
                                             \label{6.27.1}
\frac{d}{p}+\frac{2}{ q }<2.
\end{equation}

For each $T\in(0,\infty)$ we have $b_{n}\to b$ 
in $L_{2p,2 q ,T}$.

(3) For each $T>0$ 
there is a constant $\delta(T)>0$ such that 
\begin{equation}                                                                            \label{condition 6.19.1}
\min_{i:t_{i+1}\leq T}(t^n_{i +1}-t^n_{i })/d_n(T)\geq \delta(T) 
\quad\text{for all $n\geq1$}, 
\end{equation} and for a $\gamma=
\gamma(T)\in(0,( q -1)/ q )$
$$
B(T):= \sup_{n\geq1}d_n^{\gamma/2}(T)
\|b_{n}\|_{\infty,2 q ,T}<\infty.
$$
\end{assumption}

\begin{theorem}                                                                     \label{theorem 6.21.5}                      
Under Assumption
\ref{assumption 7.5.1} suppose that for equation 
\eqref{6.21.2} the pathwise uniqueness holds. 
Then the tamed Euler approximations  converge  in probability, 
uniformly on finite time intervals,  to a continuous 
$\cF^{ w }_t\vee\sigma(\xi)$-adapted process  $x(t)$, which is the unique 
solution of \eqref{6.21.2}.
\end{theorem}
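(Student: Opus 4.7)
The plan is to follow the same scheme used for Theorems \ref{Theorem 2.4} and \ref{Theorem 2.8}, which is driven by Lemma \ref{lemma 6.4.1}. Given any two subsequences $\{x_l\}$ and $\{x_m\}$ of the tamed Euler approximations, I would extract a further subsequence along which the triple $(x_{l(k)}, x_{m(k)}, w)$ converges weakly in $C([0,\infty);\bR^d\times\bR^d\times\bR^{d_1})$, invoke Skorokhod's representation theorem to obtain an a.s.-convergent realization $(\tilde x, \tilde y, \tilde w)$ on a new probability space, identify both $\tilde x$ and $\tilde y$ as solutions of \eqref{6.21.2} driven by the same Brownian motion $\tilde w$ and the same initial condition, and then invoke pathwise uniqueness to conclude $\tilde x=\tilde y$. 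Lemma \ref{lemma 6.4.1} then yields convergence of $x_n$ in probability; the $\cF^{w}_t\vee\sigma(\xi)$-adaptedness is the standard consequence of uniqueness, e.g. as in the proof of Theorem \ref{Theorem 2.4}.

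The main technical ingredient, flagged already in the abstract, is an exponential moment estimate along the discretized process: for each $T>0$ and each nonnegative Borel $f$ with $\|f\|_{p,q,T}<\infty$, one should prove a bound of Khasminskii--Krylov type,
$$
\sup_{n\geq 1}\,E\exp\!\Bigl(\lambda\!\int_0^T f(s,x_n(\kappa_n(s)))\,ds\Bigr)\leq C\bigl(\lambda,T,\|f\|_{p,q,T}\bigr),
$$
valid for $\lambda$ in a suitable range. The proof of this estimate should proceed by an induction over grid intervals $[t^n_i,t^n_{i+1}]$, on each of which $x_n(\kappa_n(s))$ is the linear interpolation of a Gaussian increment plus a frozen drift bump of size $d_n(T)\cdot\|b_n(\cdot,x_n(t_i^n))\|$; condition \eqref{condition 6.19.1} and the bound $B(T)=\sup_n d_n^{\gamma/2}(T)\|b_n\|_{\infty,2q,T}<\infty$ ensure that this drift bump is dominated by the diffusion on each step, so that the conditional law of $x_n(\kappa_n(s))$ admits a Krylov-type $L_{p,q}$ estimate matching the one available for the true solution. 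The constraint $d/p+2/q<2$ from Assumption \ref{assumption 7.5.1}(2) is precisely what is needed for such an estimate to yield a finite moment generating function.

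Granting the exponential moment bound, tightness of $\{x_n\}$ in $C([0,T];\bR^d)$ follows from the standard Kolmogorov criterion applied after Chebyshev: the diffusion part is handled by the BDG inequality, while for the drift increment $\int_s^t b_n(r,x_n(\kappa_n(r)))\,dr$ one applies H\"older's inequality in time together with the exponential estimate to $|b_n|^{2}$. On the Skorokhod copy, the stochastic integral passes to the limit by H\"older continuity of $\sigma$ and dominated convergence as in Theorem \ref{Theorem 2.8}. For the drift term one writes $b_n = (b_n - b^{(\varepsilon)}) + b^{(\varepsilon)}$ with $b^{(\varepsilon)}$ a smooth mollification of $b$: the smooth piece converges along the a.s.-convergent paths by dominated convergence, while the error $b_n-b^{(\varepsilon)}$ is controlled in expectation by the $L_{p,q}$-Krylov estimate derived from the exponential moment bound, noting that $b_n\to b$ in $L_{2p,2q,T}$ and $b^{(\varepsilon)}\to b$ in the same norm as $\varepsilon\to 0$.

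The hard part will unquestionably be the exponential moment estimate and the accompanying $L_{p,q}$-Krylov estimate for the tamed Euler scheme, uniformly in $n$. The subtlety is that the standard Krylov estimate rests on the uniform non-degeneracy of $\sigma\sigma^{*}$ and on parabolic PDE machinery that applies to an It\^o process, whereas $x_n(\kappa_n(s))$ is a frozen quantity across each interval; the trick is to convert the one-step increment into an It\^o-like object using that the drift contribution over one step is of order $d_n(T)\|b_n\|_{\infty,2q,T}\lesssim d_n(T)^{1-\gamma/2}$, which is small relative to the diffusive scale $d_n(T)^{1/2}$ thanks to $\gamma<(q-1)/q<1$. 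Once this step-by-step comparison is made, the remaining identification of the weak limit as a strong solution and the concluding application of pathwise uniqueness largely parallel the proofs of Theorems \ref{Theorem 2.4} and \ref{Theorem 2.8}.
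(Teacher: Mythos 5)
Your overall skeleton (tightness, Skorokhod embedding, identification of the limit as a solution, pathwise uniqueness, and then Lemma \ref{lemma 6.4.1}) coincides with the paper's, and you correctly single out a Khasminskii--Krylov type exponential bound along the discretized process as the crux. The genuine gap is that you never supply the mechanism behind that bound: you assert that ``the conditional law of $x_n(\kappa_n(s))$ admits a Krylov-type $L_{p,q}$ estimate matching the one available for the true solution'' and locate the difficulty in the one-step drift bump being small relative to the diffusive scale. That mislocates the problem, which is already present when $b\equiv 0$: for the frozen-coefficient scheme there is no off-the-shelf Krylov estimate coming from nondegeneracy alone. The paper has to prove a density bound for the driftless Euler scheme (Theorem \ref{Theorem 4.2}, resting on the commutator-type estimate of Lemma \ref{Lemma 4.1} and a bootstrap in the constant $\gamma_T$), and this genuinely uses the H\"older continuity of $\sigma$ --- by Remark \ref{Remark 4.4} such estimates \emph{fail} if H\"older continuity is weakened to uniform continuity, so no soft ``Gaussian step plus small drift bump'' argument can produce them. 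That density estimate, in its conditional form (Remark \ref{remark 6.28.1}), is then fed into a Gronwall argument on $\Phi(t)=\esssup E\{e^{\phi(t)}\mid\cF_{\kappa_n(t)}\}$ to obtain the exponential bound (Theorem \ref{theorem 6.28.1}); note also that the correct bound necessarily carries the extra term $d_n^{\,q-1}(T)\|f\|_{\infty,q,T}^{q}$ coming from the first grid interval, so your cleaner statement with only $\|f\|_{p,q,T}$ on the right cannot hold uniformly in $n$, and Assumption \ref{assumption 7.5.1}(3) with $\gamma<(q-1)/q$ exists precisely to render that extra term harmless when $f=|b_n|^2$.

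A further difference from the paper, worth noting because it simplifies matters considerably: the drift is not handled by a per-step induction at all, but by Girsanov's theorem. The exponential estimate is proved for the driftless scheme, then Proposition \ref{proposition 6.20.1} bounds all moments of the Girsanov exponent uniformly in $n$ (Remark \ref{remark 6.30.1}), which yields the Krylov estimate for the tamed scheme (Proposition \ref{proposition 6.21.1}), tightness (Proposition \ref{proposition 6.24.6}), and the corresponding estimate for the limit process (Proposition \ref{proposition 6.24.2}). Your passage to the limit in the drift term is essentially the paper's Lemma \ref{lemma 6.24.10}, except that you must first truncate ($b_n^m=b_n m/(m+|b_n|)$) before comparing with a continuous $\bar b$, since the Euler-scheme Krylov estimate carries the $d_n^{\gamma}(T)\|\cdot\|_{\infty,q,T}$ term and $|b_n-\bar b|^2$ need not be bounded. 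In short: right strategy at the top level, but the core analytic input (Lemma \ref{Lemma 4.1} and Theorem \ref{Theorem 4.2}) is missing, and the route you sketch for obtaining it would not work as described.
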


\mysection{Proof of Theorem \ref{Theorem 2.4}}
  \label{section 3}
 
 For every positive integers $k$, $n$ define the stopping
time 
$$
\tau_n^k:=\inf \{t\geq 0:\, x_n(t)\notin D_k\}.
$$
 Then
$$ |b(t, x_n(\kappa_n (t)))|\leq M_k(t),\qquad |\sigma
(t,x_n(\kappa_n (t)))|^{2}\leq M_k(t)
$$ for $t\leq \tau_n^k$, and clearly the family of stochastic
processes
 $\{ x_n^k\,:\,n=1,2,...\}$ defined by
$$ x_n^k(t):=x_n(t\wedge \tau_n^k), 
$$ is  tight in  $C([0,T])$ for  every $k$ and   $T\geq 0$.
We want to deduce from this the  tightness  in  $C([0,T])$ 
of 
\begin{equation}
                                                                  \label{3.1}
\{ (x_n(t))_{t\in
[0,T]}\,:\,n=1,2,...\}.                                         
\end{equation}
Clearly, it suffices to show that  
\begin{equation}
                                                                 \label{3.2}
\lim_{k\to\infty}\limsup_{n\to\infty}P(\tau_n^k \leq
T)=0.                             
\end{equation}

At first fix $k$ and apply Skorokhod's embedding theorem. Then by
virtue of the  tightness  of distributions of $x_n^k(t)$ in
$C([0,T])$  for every $T\geq 0$, we can find a subsequence $n(j)$ and 
a probability space $(\tilde\Omega,\tilde \cF,\tilde P)$, 
carrying the sequences of continuous processes 
$\tilde x_{n(j)}^k$,  $\tilde w_j$,  such that for
every                        positive integer $j$ finite dimensional
distributions of 
$$ 
(\tilde x_{n(j)}^{k},\tilde w_j)\,\,\text
{and}\,\,(x_{n(j)}^{k},w)
$$ 
 coincide, and for any $T<\infty$  for $\tilde P$--almost every
$\tilde \omega\in\tilde\Omega$
\begin{equation}
                                                  \label{3.3}
\sup_{t\leq T}|\tilde x_{n(j)}^k(t)-\tilde x^{k}(t)|\to
0,\,\,\,\,\,\,
\sup_{t\leq T}|\tilde w_j(t)-\tilde w(t)|\to
0,			                                    
\end{equation}
 as $j\to\infty$, where $\tilde x$, $\tilde w$ are some stochastic
processes. Define $\tilde{\tau}^{k}_{n(j)},\tilde{\tau}^{k}$ as the
first exit times from
$D_{k}$ of the processes $\tilde x_{n(j)}^k,\tilde x^{k}$,
respectively. It  follows from (3.3) that
\begin{equation}
                                                  \label{3.4}
\liminf\limits_{j\to\infty}\tilde{\tau}^{k}_{n(j)}\geq\tilde{\tau}^{k}\,\,\,
{\text
(a.s.)}.                                                                     
\end{equation}
 Next define 
$$
\tilde \cF_t^j: =\sigma(\tilde x_{n(j)}^{k}(s),  \tilde
w_j(s):s\leq t),
\,\,\,\,\,\,
\tilde \cF_t : =\sigma(\tilde x^{k}(s),  \tilde w(s):s\leq t).
$$ 
Then it is easy to see that for every $j$ the process $(\tilde
w_j(t),\tilde\cF_t^j)$  and $(\tilde w (t),\tilde\cF_t)$  are
Wiener processes, and for all $t\in [0,\tilde{\tau}^{k}_{n(j)})$
\begin{equation}
                                                  \label{3.5}
\tilde x_{n(j)}^{k}(t)=\tilde x_{n(j)}^{k}(0)+
\int_0^t b(s,\tilde x_{n(j)}^{k}(\kappa_{n(j)}(s)))\,ds+
\int_0^t \sigma(s,\tilde x_{n(j)}^{k}(\kappa_{n(j)}(s)))\,d\tilde
w_j(s),               
\end{equation}
 almost surely. Now we make use of the following lemma which is
just an adaptation of a result of Skorokhod \cite{13.}.

\begin{lemma}
                                                \label{Lemma 3.1} 
Let $f(s,x)$ be a continuous in $x$ and Borel in
$s$ bounded function defined on $\Bbb{R}_{+}\times\Bbb{R}^{d}$. Then
for any $i=1,...,d_{1}$
$$
\int_0^t f(s,\tilde x_{n(j)}^{k}( s ))\,ds \to
\int_0^t f(s,\tilde x^{k}( s ))\,ds, 
$$
$$
\int_0^t f(s,\tilde x_{n(j)}^{k}(\kappa_{n(j)}(s) ))\,ds
\to                      
\int_0^t f(s,\tilde x^{k}( s ))\,ds, 
$$
$$
\int_0^t f(s,\tilde x_{n(j)}^{k}( s ))\,d\tilde w_{j}^{i}(s)\to
\int_0^t f(s,\tilde x^{k}( s ))\,d\tilde w^{i}(s),
$$
\begin{equation}
                                                  \label{3.6} 
\int_0^t f(s,\tilde x_{n(j)}^{k}(\kappa_{n(j)}(s)))\,d\tilde
w_{j}^{i}(s)\to
\int_0^t f(s,\tilde x^{k}( s ))\,d\tilde w^{i}(s)                                                                                  
\end{equation}
 uniformly in $t\in[0,T]$ in probability for any $T<\infty$.
\end{lemma}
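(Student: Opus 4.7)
The key preliminary observation is that on the full-probability event where \eqref{3.3} holds, all processes $\tilde x^k_{n(j)}$ and $\tilde x^k$ take values in the bounded closed set $\bar D_k$: the former are the embedded copies of $x_{n(j)}(\cdot\wedge\tau_{n(j)}^k)$ and thus stopped upon exiting $D_k$, and the latter is their uniform limit. Moreover, uniform continuity of the continuous path $\tilde x^k$ on $[0,T]$, combined with \eqref{3.3} and $|\kappa_{n(j)}(s)-s|\leq d_{n(j)}(T)\to 0$, yields
\begin{equation*}
\sup_{s\leq T}|\tilde x^k_{n(j)}(\kappa_{n(j)}(s))-\tilde x^k(s)|\to 0\quad\text{a.s.}
\end{equation*}
Thus, in all four integrals in \eqref{3.6}, the integrands are uniformly bounded and the random arguments converge pointwise (in $s$) to $\tilde x^k(s)$.

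The Lebesgue-integral assertions (first two lines of \eqref{3.6}) are then immediate pathwise: fix $\tilde\omega$ in the full-probability event. Continuity of $f(s,\cdot)$ gives pointwise-in-$s$ convergence of the integrands; boundedness of $f$ and dominated convergence yield
$\int_0^T|f(s,\tilde x^k_{n(j)}(s))-f(s,\tilde x^k(s))|\,ds\to 0$, and similarly with $\kappa_{n(j)}(s)$ in place of $s$. This implies uniform-in-$t$ convergence, hence convergence in probability.

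For the two stochastic integrals, I would proceed by a double approximation. First, time-mollify by $f_m(s,x):=m\int_{(s-1/m)_+}^s f(r,x)\,dr$. Compactness of $\bar D_k$, continuity of $f(s,\cdot)$, and dominated convergence ensure that $f_m$ is bounded by $\sup|f|$, jointly continuous in $(s,x)\in[0,T]\times\bar D_k$, and satisfies $\int_0^T\sup_{x\in\bar D_k}|f_m(s,x)-f(s,x)|^2\,ds\to 0$. By It\^o's isometry and Doob's maximal inequality, the error in either stochastic integral arising from replacing $f$ by $f_m$ is controlled by this quantity, uniformly in $j$. Second, for jointly continuous $f_m$, I approximate each stochastic integral by its Riemann sum over a fixed mesh $\Delta$ of $[0,T]$: for fixed $\Delta$, the Riemann sums built from $(\tilde x^k_{n(j)},\tilde w_j)$ converge to those built from $(\tilde x^k,\tilde w)$ uniformly in $t$ a.s., directly from \eqref{3.3} and continuity of $f_m$. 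As $|\Delta|\to 0$, the Riemann sums converge in $L^2$ to the stochastic integrals, uniformly in $j$, via an It\^o-isometry estimate using the modulus of continuity of $f_m$ and the a.s.\ equicontinuity of the family $\{\tilde x^k_{n(j)}\}_j$. A standard three-$\varepsilon$ diagonal extraction then closes the argument for the third convergence in \eqref{3.6}.

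The main obstacle is the fourth convergence, where $\kappa_{n(j)}(s)$ appears inside the random argument of $f$. The additional step is the $L^2$ comparison
$\int_0^T|f_m(s,\tilde x^k_{n(j)}(\kappa_{n(j)}(s)))-f_m(s,\tilde x^k_{n(j)}(s))|^2\,ds\to 0$
in probability, which follows from the uniform continuity of $f_m$ on $[0,T]\times\bar D_k$ together with the a.s.\ equicontinuity of $\tilde x^k_{n(j)}$ at scale $d_{n(j)}(T)\to 0$ (extracted from \eqref{3.3} and continuity of $\tilde x^k$). Once this uniform smallness is in hand, the proof reduces to the third convergence already established, completing all four assertions.
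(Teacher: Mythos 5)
Your proposal is essentially correct, but note that the paper does not actually prove Lemma \ref{Lemma 3.1}: it is invoked as ``just an adaptation of a result of Skorokhod \cite{13.}'', so you are supplying an argument the paper leaves to the literature rather than paralleling one. What you write is the standard Skorokhod-type proof and it holds together: pathwise dominated convergence for the two Lebesgue integrals; for the stochastic integrals, time-mollification $f_m$ of $f$ (your claim $\int_0^T\sup_{x\in\bar D_k}|f_m-f|^2\,ds\to0$ is true, though it deserves a word of justification — it is the Lebesgue differentiation theorem for the bounded Bochner-integrable map $s\mapsto f(s,\cdot)\in C(\bar D_k)$, which is exactly where continuity in $x$ and compactness enter); Riemann sums converging pathwise for a fixed mesh thanks to \eqref{3.3}; It\^o isometry plus Doob giving mollification- and mesh-errors uniform in $j$; and a.s.\ equicontinuity of the family $\{\tilde x^k_{n(j)}\}_j$ (a uniformly convergent sequence of continuous paths) to absorb the $\kappa_{n(j)}$ time shift in the fourth convergence. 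One small inaccuracy: it is not quite true that $\tilde x^k_{n(j)}$ and $\tilde x^k$ take values in $\bar D_k$, because on the event $\{\xi\notin D_k\}$ the stopped process sits at $\xi$, which is only known to lie in $D$; this is easily patched, e.g.\ by running your estimates over $\bar D_{k'}$ with $k'\geq k$ and paying an extra term of order $\sup|f|^2\,T\,P(\xi\notin D_{k'})$ in the isometry bounds, which is harmless for convergence in probability since $P(\xi\notin D_{k'})\to0$. With that patch your proof is complete and self-contained, which is more than the paper itself provides for this lemma.
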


Owing to (3.4) and (3.6) we then conclude that for
$t<\tilde{\tau}^{k}$ (a.s.)
$$
\tilde x^{k}(t)=\tilde x^{k}(0)+
\int_0^t b(s,\tilde x^{k}( s ))\,ds+
\int_0^t \sigma(s,\tilde x^{k}(s))\,d\tilde w(s).            
$$

In the proof of estimate \eqref{6.4.2.3} we have used only that $x(t)$
satisfies equation \eqref{6.4.2.1} until it hits $\partial D_{k}$. Therefore
estimate \eqref{6.4.2.3} holds for our
$\tilde{\tau}^{k}$, and since $\tau^{k}_{n}$ have the same
distributions as
$\tilde{\tau}^{k}_{n}$,
$$
\lim_{k\to\infty}\limsup_{j\to\infty}P(\tau_{n(j)}^{k} \leq T)=
\lim_{k\to\infty}\limsup_{j\to\infty}P(\tilde{\tau}_{n(j)}^{k} \leq
T)
$$
$$
\leq\lim_{k\to\infty}P(\tilde{\tau}^{k} \leq T)=0.
$$

The arbitrariness in the choice of the subsequence $n(j)$ allows us to
assert that \eqref{3.2} holds, and thus the family \eqref{3.1} is indeed  tight.  

On our way of applying Lemma 1.1 we now take two subsequences $x_l$,
$x_m$  of the approximations 
$\{x_n\}_{n=1}^{\infty}$. Then obviously $\{(x_l,x_m) \}$ is a tight
family   of processes in
$C([0,T];\Bbb R^{2d})$ for any $T<\infty$.  Again by Skorokhod's
embedding theorem there exist subsequences $l(j)$, $m(j)$, a
probability space $(\hat\Omega,\hat \cF,
\hat P)$,  carrying  sequences of continuous processes 
$\hat x_{l(j)}$, $\bar x_{m(j)}$, $\hat w_j$,  such that for every
positive integer $j$ the finite dimensional distributions of 
$$ (\hat x_{l(j)},\bar x_{m(j)},\hat w_j)\,\,\text
{and}\,\,(x_{l(j)},x_{m(j)},w)
$$  coincide, and for $\hat P$--almost every $\hat
\omega\in\hat\Omega$
$$
\sup_{t\leq T}|\hat x_{l(j)}(t)-\hat x(t)|\to 0,\qquad \sup_{t\leq T}|
\bar x_{l(j)}(t)-\bar x(t)|\to 0,
$$
$$
\sup_{t\leq T}|\hat w_j(t)-\hat w(t)|\to 0,
$$ as $j\to\infty$ for any $T<\infty$,  where $\hat x$, $\bar x$,
$\hat w$ are some stochastic processes.

In the same  way as above we get that for any $k$ the processes
$\hat{x}(t)$ and $\bar{x}(t)$ satisfy equation \eqref{6.4.2.1} on the time
intervals
$[0,\hat{\tau}^{k})$ and $[0,\bar{\tau}^{k})$ respectively with
$\hat{w}$ instead of $w$, where $\hat{\tau}^{k}$ and $\bar{\tau}^{k}$
are defined in an obvious way. Again as above $\hat{\tau}^{k}$,
$\bar{\tau}^{k}\to\infty$, so that actually $\hat{x}(t)$  and
$\bar{x}(t)$ satisfy the corresponding equation on 
$[0,\infty)$. Since the initial condition in both cases is the same
($\hat{x}_{l(j)}(0)=\bar{x}_{m(j)}(0)$ because
$x_{l}(0)=x_{m}(0)=\xi$) and since the joint distribution of the
initial value and $\hat{w}$ coincides with the distribution of
$\xi,w$, by the pathwise uniqueness we conclude that
$\hat{x}(t)=\bar{x}(t)$ for all $t$ (a.s.).  Hence, by applying Lemma
1.1 we finish the proof of Theorem \ref{Theorem 2.4}.

\mysection{An estimate of densities for Euler's approximations}

 In the case when the coefficients of equation \eqref{6.4.2.1} are not
supposed to be continuous, in order to apply the above scheme we need
a counterpart of Lemma \ref{Lemma 3.1} for measurable $f$. The proof of the
corresponding assertion is based on an estimate on the densities of
distribution of the Euler approximation
$x_n(t)$. Since such  estimates can be applied in other situations,
the result we prove
 below is stronger than we actually need in the proof of Theorem \ref{Theorem 2.8}.

First of all we need the following lemma.

\begin{lemma}
                                                 \label{Lemma 4.1}
 Let $K,t,\varepsilon>0,\alpha\in(0,1]$ be fixed
numbers, and
 let $a(x)$ be a $d\times d$ matrix-valued function such that
$KtI\geq a=a^{*}\geq \varepsilon tI$,  where $I$ is the $d\times d$
unit matrix. Also let
  $g(x)$ be a real-valued function such that $|g(x)-g(y)|\leq
K|x-y|^{\alpha}$ for all $x,y$. Let $\xi$ and $\eta$ be independent
$d$-dimensional Gaussian vectors with  zero means.  Assume $\xi\sim
{\cN} (0,I)$ and $\kappa_{i}\leq K\kappa_{j}$
for $i,j=1,...,d$, where the $\kappa_{i}$'s are
the eigenvalues of the covariance matrix of $\eta$.
 Define an operator $T^*$ by the formula
$T^{*}f(y)=Ef(y+\sqrt{a(y)}\xi)$ and let $T$ be  the conjugate for
$T^{*}$ in $L_2$--sense. Then for any $i,j=1,...,d$, $x\in\Bbb
R^{d}$, $p\in[1,\infty]$, and bounded Borel
$f$
\begin{equation}
                                                      \label{4.1}
 \Big|g(x)E[D_{ij}T f](x+\eta)- E[D_{ij}T(gf)](x+\eta)\Big|\leq
Nt^{-d/(2p)-1+\alpha/2}\|f\|_{p},                                                   
\end{equation}

\begin{equation}
                                                      \label{4.2}
\Big\{\int_{\Bbb R^{d}}\big|g(x)E[D_{ij}T f](x+\eta)- E[D_{ij}T(gf)](x+\eta)\big|^{p}\,dx\Big\}^{1/p}\leq
Nt^{-1+\alpha/2}\|f\|_{p},                                                  
\end{equation}
 where the constants $N$ depend only on $K,\varepsilon,d,p$.
\end{lemma}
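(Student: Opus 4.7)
The plan is to write $T$ explicitly as an integral operator, rewrite the quantity on the left of \eqref{4.1} and \eqref{4.2} as an integral of $[g(x)-g(z)]$ against $D_{ij}$ of a Gaussian density of the combined covariance $a(z)+\Sigma_{\eta}$, and then close via Gaussian-derivative estimates combined with H\"older (for \eqref{4.1}) and Young (for \eqref{4.2}) inequalities.

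To begin, since $T^{*}f(y)=\int f(z)\varphi_{a(y)}(z-y)\,dz$ with the notation $\varphi_{A}(u):=(2\pi)^{-d/2}(\det A)^{-1/2}\exp(-u^{\top}A^{-1}u/2)$, the $L_{2}$-adjoint is $(Tf)(x)=\int f(y)\varphi_{a(y)}(x-y)\,dy$. Differentiating twice in $x$ under the integral (justified by pointwise Gaussian bounds) and forming the relevant difference gives
$$
g(x)\,D_{ij}Tf(y)-D_{ij}T(gf)(y)=\int[g(x)-g(z)]\,f(z)\,D_{ij}^{y}\varphi_{a(z)}(y-z)\,dz.
$$
Substituting $y=x+\eta$, taking the expectation $E_{\eta}$, commuting the derivative with the expectation, and using the Gaussian convolution identity $E_{\eta}[\varphi_{a(z)}(u+\eta)]=\phi_{A(z)}(u)$ with $A(z):=a(z)+\Sigma_{\eta}$ then yields the clean identity
$$
\Phi(x):=g(x)E[D_{ij}Tf](x+\eta)-E[D_{ij}T(gf)](x+\eta)=\int[g(x)-g(z)]f(z)D_{ij}\phi_{A(z)}(x-z)\,dz.
$$

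Next I apply $|g(x)-g(z)|\le K|x-z|^{\alpha}$ to obtain $|\Phi(x)|\le K\int|x-z|^{\alpha}|f(z)|\,|D_{ij}\phi_{A(z)}(x-z)|\,dz$, and record the key spectral fact: because $\varepsilon tI\le a(z)\le KtI$ and the $\kappa_{i}$ are mutually comparable by the hypothesis $\kappa_{i}\le K\kappa_{j}$, one has $A(z)\asymp\lambda I$ uniformly in $z$ for some $\lambda\ge\varepsilon t$ with constants depending only on $K,\varepsilon,d$. Standard Gaussian-derivative estimates then give $|D_{ij}\phi_{A(z)}(u)|\le C\lambda^{-d/2-1}e^{-c|u|^{2}/\lambda}$. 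For \eqref{4.1}, I apply H\"older's inequality in $z$ with conjugate exponents $p,p'$ and, after the rescaling $v=u/\sqrt\lambda$, obtain
$$
\Big(\int|x-z|^{\alpha p'}|D_{ij}\phi_{A(z)}(x-z)|^{p'}\,dz\Big)^{1/p'}\le C\lambda^{-1+\alpha/2-d/(2p)}\le Ct^{-d/(2p)-1+\alpha/2},
$$
where the last inequality uses $\lambda\ge\varepsilon t$ together with the negativity of $-1+\alpha/2-d/(2p)$ forced by $\alpha\le 1$. For \eqref{4.2}, I dominate the $z$-dependent kernel by the translation-invariant function $\tilde H(u):=C\lambda^{-d/2-1}|u|^{\alpha}e^{-c|u|^{2}/\lambda}$, so $|\Phi|\le\tilde H*|f|$, and apply Young's inequality with $\|\tilde H\|_{1}\le C\lambda^{-1+\alpha/2}\le Ct^{-1+\alpha/2}$.

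The main obstacle is the spectral-comparability step for $A(z)=a(z)+\Sigma_{\eta}$. The hypothesis $\kappa_{i}\le K\kappa_{j}$ is used precisely here: it forces $\Sigma_{\eta}\asymp(\max_{i}\kappa_{i})I$ which, combined with $a(z)\asymp tI$, makes $A(z)\asymp\lambda I$ with a common scale $\lambda\ge\varepsilon t$ regardless of whether $t$ or $\max_{i}\kappa_{i}$ dominates. The negativity of the exponents (from $\alpha\le1$) is then what allows one to pass from $\lambda^{-1+\alpha/2-d/(2p)}$ and $\lambda^{-1+\alpha/2}$ to the corresponding powers of $t$ at the cost of constants depending only on $K,\varepsilon,d,p$; without that comparability assumption one could not pass from the $\lambda$-scale of the Gaussian bounds to the $t$-scale asserted on the right-hand sides.
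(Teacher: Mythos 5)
Your proposal is correct and follows essentially the same route as the paper: write $T$ as an integral against a Gaussian kernel, use the Gaussian convolution identity to absorb the covariance of $\eta$ into $A(z)=a(z)+\Sigma_\eta$, exploit the H\"older continuity of $g$ to produce the factor $|x-z|^{\alpha}$, use the hypotheses $\varepsilon tI\le a\le KtI$ and $\kappa_i\le K\kappa_j$ to reduce to a single Gaussian scale $\lambda\gtrsim t$, and finish with H\"older for \eqref{4.1} and a convolution ($L^1$-kernel) bound for \eqref{4.2}. The only cosmetic differences are that the paper bounds $D_{ij}$ of the Gaussian via the quadratic form $\lambda^i\lambda^j$ polarization and keeps the explicit quantity $\varepsilon t+\kappa_1$, and uses Minkowski's integral inequality where you invoke Young's inequality.
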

\begin{proof} First observe that 
\begin{equation}
                                            \label{6.11.1}
 Tf(x)
=\int_{\Bbb R^{d}}(2\pi\det a(y))^{-d/2}f(y) 
\exp\{-(a^{-1}(y)(y-x),y-x)/2\} \,dy,
\end{equation}
$$
E[D_{ij}T f](x+\eta)=D_{ij}E T f (x+\eta),
\quad E[D_{ij}T(gf)](x+\eta)=D_{ij}E T(gf) (x+\eta),
$$
$$ 
E(2\pi\det a)^{-d/2}\exp\{-(a^{-1}(y-x-\eta),y-x-\eta)/2\}
$$
$$ 
=(2\pi\det(a+a_{1}))^{-d/2}\exp\{-((a+a_{1})^{-1}(y-x ),y-x )/2\}
=:p_{a}(x,y),
$$ where $a_{1}$ is the covariance matrix of $\eta$. Note 
also that 
$$
KtI+a_{1}\geq a(y)+a_{1}\geq \varepsilon tI+a_{1}
\geq(\varepsilon/K)(KtI+a_{1}),
$$
$$
(KtI+a_{1})^{-1}\leq (a(y)+a_{1})^{-1}
\leq (\varepsilon tI+a_{1})^{-1}
\leq (K/\varepsilon)(KtI+a_{1})^{-1},
$$
$$ \det(a(y)+a_{1})\geq\det(\varepsilon tI+a_{1})\geq (\varepsilon/K)^{d}\det(KtI+a_{1}).
$$
It follows that $p_{a}(x,y)  \leq r(x-y)$,
where
$$
r(z):=
( K/\varepsilon)^{d^{2}/2}\big(2\pi\det(KtI+a_{1})\big)^{-d/2}
\exp\big\{-\big((KtI+a_{1})^{-1}z,z\big)/2
\big\}.
$$

Next, let
$A(y)=(a(y)+a_{1})^{-1}$,  then
$$ 
ED_{ij}Tf(x+\eta)
$$
$$
=
\int_{\Bbb R^{d}} f(y) [(A(y)(y-x))_{i}(A(y)(y-x))_{j}- A_{ij}(y)]
p_{a(y)}(x,y)\,dy.
$$
This allows us to deal with
$$
I_{\lambda}(x):=
\lambda^{i}\lambda^{j}\Big(g(x)E[D_{ij}T f](x+\eta)- E[D_{ij}T(gf)](x+\eta)\Big),
$$
where $\lambda$ is a fixed vector in $\bR^{d}$. By the above
$$ 
I_{\lambda}(x)=
 \int_{\Bbb R^{d}}[g(x)-g(y)] f(y)
\big[(\lambda,A(y)(y-x))^{2}-
\lambda^{i}\lambda^{j} A_{ij}(y)\big] p_{a(y)}(x,y)\,dy .
$$
By ordering the eigenvalues of $a_{1}$ as $\kappa_{1}
\leq...\leq\kappa_{d}$ we have
$$
 0\leq \lambda^{i}\lambda^{j} A_{ij}(y)
\leq(\varepsilon t+\kappa_{1})^{-1}|\lambda|^{2},
$$
$$
|(\lambda,A(y)z)|\leq(\lambda,A(y)\lambda)^{1/2}(z,A(y)z)^{1/2}
$$
$$
\leq
(\varepsilon t+\kappa_{1})^{-1/2}|\lambda|(K/\varepsilon)^{1/2}
(z,(K tI+a_{1})^{-1}z)^{1/2}.
$$

Using this and making the change of variables $x-y=\sqrt{KtI+a_{1}}z$
we find that $|I_{\lambda}(x)|$ is less than or equal to
$$
 N(\varepsilon t+\kappa_{1})^{-1}|\lambda|^{2}
  \int_{\Bbb R^{d}}|f(y)\|x-y|^{\alpha}
 \Big[\big(x-y,(K tI+a_{1})^{-1}(x-y)\big)+1\Big]r(x-y)\,dy
$$
$$
= N (\varepsilon t+\kappa_{1})^{-1}|\lambda|^{2}
\int_{\Bbb R^{d}}
|\sqrt{KtI+a_{1}}z|^{\alpha }\big|f(x-
\sqrt{KtI+a_{1}}z)\big[|z|^{2}+1\big]e^{-|z|^{2}/2}\,dz
$$
$$
\leq N(\varepsilon t+\kappa_{1})^{\alpha/2 -1}|\lambda|^{2}
\int_{\Bbb R^{d}}\big|f(x-\sqrt{KtI+a_{1}}z)
\big\|z|^{\alpha}[|z|^{2}+1]e^{-|z|^{2}/2}\,dz.
$$
The arbitrariness of $\lambda$ implies that
$$
\Big|g(x)E[D_{ij}T f](x+\eta)- E[D_{ij}T(gf)](x+\eta)\Big|
$$
$$
\leq N(\varepsilon t+\kappa_{1})^{\alpha/2-1 } 
\int_{\Bbb R^{d}}\big|f(x-\sqrt{KtI+a_{1}}z)\big\|z|^{\alpha}
[|z|^{2}+1]e^{-|z|^{2}/2}\,dz.
$$
$$
\leq N(\varepsilon t+\kappa_{1})^{\alpha/2 -d/(2p)-1} \|f\|_{p}
\leq Nt^{-d/(2p)-1+\alpha/2}\|f\|_{p}.
$$ 
Here we have used the H\"older inequality. To prove \eqref{4.2} we apply
instead the Minkowski inequality. The lemma is proved.
\end{proof}

We will apply Lemma 4.1 to prove some estimates for distributions of
the process $x_{n}(t)$ defined as
\begin{equation}                                                                          \label{6.17.1}
x_{n}(t)=x_{0}+\int_{0}^{t}\sigma(s,x_{n}(\kappa_{n}(s)))\,dw(s),
\end{equation} 
where $x_0\in\Bbb R^d$ is non random and 
$\sigma:\Bbb R_+\times \Bbb R^d :\to\Bbb R^{d\times d_{1}}$ is Borel
measurable and satisfies the  condition 
\begin{equation}                                                      
                         \label{condition 6.17.2} 
\varepsilon I\leq(\sigma\sigma^{*})(s,x)\leq KI,
\quad
|\sigma(s,x)-\sigma(s,y)|\leq K|x-y|^{\alpha}
\end{equation}
for some constants $\alpha\in(0,1)$, $K,\varepsilon >0$ and all
$x,y\in \Bbb R^{d}$,
$s>0$.

Before stating the main result of this section we introduce some
notation. For fixed $n$ and $t>0$ a very cumbersome expression  can
be found explicitly in an obvious way for the distribution density
$p_{n}(t,x)$ of   $x_{n}(t)$. We do not know if it is possible to
estimate the density analyzing this expression, but at least it shows
that  the density is bounded on $[\delta,\delta^{-1}]\times \Bbb
R^{d}$ for any $\delta>0$. We denote by $m_{n}(t)$ the supremum of
$p_{n}(t,x)$  over $x\in \Bbb R^{d}$. The function $m_{n}(t)$ is
bounded on 
$[\delta,\delta^{-1}]$ for any $\delta>0$ and any $n$.

\begin{theorem}
                                                                            \label{Theorem 4.2}
  (a) There exists a constant $N_{0}$ depending
only on $d,\alpha, K,\varepsilon,q$ such that if $1\leq q<\frac
{d}{d-\alpha}$, then for all $t>0$, $n=1,2,3,...$
\begin{equation}
                                                    \label{4.3}
\big(\int_{\Bbb R^{d}}p_{n}^{q}(t,x)\,dx\big)^{1/q}
\leq  N_{0} (t^{- d/(2p)}+1)\ \ \
(p=q/(q-1)\,).                                    
\end{equation}

(b) If the partitions
 $\{0=t_0^n< t_1^n<...\}$ satisfy the additional condition
$\kappa_n(s)\geq\varepsilon s$ for all $n$ and $s\geq t_{1}^{n}$, then  there
exists a constant $N_{0}$ depending only on $d,\alpha, K,\varepsilon$
such that
\begin{equation}
                                                    \label{4.4}
 m_{n}(t)\leq
N_{0}(t^{-d/2}+1)                                               
\end{equation}
for any   $t>0$,  $n=1,2,3,...$
 and   \eqref{4.3}  holds for any  $t>0$,  $n=1,2,3,...$.
\end{theorem}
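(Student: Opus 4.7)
The plan is to iterate a one-step commutator estimate from Lemma~\ref{Lemma 4.1} along the partition, anchored at the first step by a direct Gaussian computation.

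\textbf{Base case.} For $t\in(0,t_1^n]$, $x_n(t)=x_0+\int_0^t\sigma(s,x_0)\,dw(s)$ is Gaussian with covariance eigenvalues in $[\varepsilon t,Kt]$, yielding both \eqref{4.3} and \eqref{4.4} directly.

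\textbf{Inductive step.} For $t\in(t_i^n,t_{i+1}^n]$ with $i\ge1$, one writes $p_n(t,x)=E[\phi_{A(t,x_n(\kappa_n(t)))}(x-x_n(\kappa_n(t)))]$ with $A(t,y)=\int_{\kappa_n(t)}^t(\sigma\sigma^*)(s,y)\,ds$, identifying the transition as the dual of the operator $T^*$ of Lemma~\ref{Lemma 4.1}. Iterating backwards through the partition, $p_n(t,\cdot)$ becomes a composition of such operators acting on $p_n(t_1^n,\cdot)$. To bound this composition one represents the Gaussian kernel through its second derivatives $D_{ij}\phi_A$ and invokes Lemma~\ref{Lemma 4.1}: the Hölder function $g$ there captures the $\alpha$-Hölder variation in $y$ of the entries of $A(t,y)$ (inherited from $\sigma$), while the auxiliary Gaussian $\eta$ represents the accumulated randomness from all previously processed intervals. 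The eigenvalue comparability $\kappa_i\le K\kappa_j$ for $\eta$ is automatic from uniform ellipticity and, in part (b), from the regularity $\kappa_n(s)\ge\varepsilon s$, which prevents anomalously short final steps.

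\textbf{Closing the iteration.} The per-step Hölder gain of exponent $\alpha/2$ in \eqref{4.1}--\eqref{4.2} must be matched against the summation over the $O(1/d_n)$ partition intervals. For part (a), after dualizing $\|p_n(t,\cdot)\|_q=\sup\{\int fp_n:\|f\|_p\le 1\}$, the summed error converges as a Riemann integral $\int_0^t s^{\alpha/2-d/(2p)-1}\,ds$, which is finite precisely when $d/(2p)<\alpha/2$, i.e., $q<d/(d-\alpha)$---exactly the stated hypothesis. This yields the $L^q$ bound $N_0(t^{-d/(2p)}+1)$. For part (b), the partition regularity makes every step comparable in size to the elapsed time, so the pointwise bound from the base case propagates through all iterations without loss, yielding $m_n(t)\le N_0(t^{-d/2}+1)$; the $L^q$ bound then follows by interpolation between $L^1$ (trivial, equal to $1$) and $L^\infty$.

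\textbf{Main obstacle.} The subtle point is the bookkeeping of covariances across the iteration: at each inductive step, the auxiliary $\eta$ of Lemma~\ref{Lemma 4.1} must proxy the randomness accumulated up to the previous partition point with eigenvalue comparability, uniformly in $n$. Aligning $\eta$ and $g$ consistently as the partition is traversed---so that the constants in \eqref{4.1}--\eqref{4.2} remain $n$-independent and the summation $\sum_i\Delta_i^{\alpha/2-d/(2p)}$ telescopes into the asserted time-integrable bound---is where the ellipticity and (for part (b)) the partition regularity do their essential work.
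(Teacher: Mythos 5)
Your overall strategy (a parametrix-type comparison with frozen-coefficient Gaussian flows, the commutator estimate of Lemma~\ref{Lemma 4.1} supplying a per-step gain of order $(t-s)^{\alpha/2-1}$, duality for the $L^q$ bound, and interpolation $p_n^q\le p_n m_n^{q-1}$ for the last claim) is the same general idea as the paper's, but two essential ingredients are wrong or missing. First, you apply Lemma~\ref{Lemma 4.1} with $\eta$ being ``the accumulated randomness from all previously processed intervals.'' That cannot work: the lemma requires $\eta$ to be a Gaussian vector, independent of $\xi$, with comparable covariance eigenvalues, while the accumulated Euler increments up to $\kappa_n(t)$ are not Gaussian (each increment's covariance depends on the position at the previous grid point) and are not independent of the remaining randomness. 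In the correct application (as in the paper), $\eta=\eta(s,x)=\int_{\kappa_n(s)}^{s}\sigma(r,x)\,dw(r)$ is the \emph{single-step} conditionally Gaussian increment given $x_n(\kappa_n(s))$, the operator $T=T_{s,t}$ freezes the coefficient over the remaining time, and the error term $H(s,t,x)$ is then bounded \emph{uniformly in $x$} by $N(t-s)^{-d/(2p)-1+\alpha/2}\|f\|_p$, which is precisely what lets one integrate in $s$ without knowing anything about the law of $x_n(\kappa_n(s))$; your formulation, which views $p_n(t,\cdot)$ as a composition of one-step operators acting on $p_n(t_1^n,\cdot)$, is circular unless you show the composition is bounded on $L^q$ uniformly in the number of steps, which is the statement being proved. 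Relatedly, the eigenvalue comparability of $\eta$ has nothing to do with the hypothesis $\kappa_n(s)\ge\varepsilon s$; it comes from the one-step ellipticity alone.

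Second, part (b) is asserted rather than proved. Saying that partition regularity makes ``the pointwise bound propagate through all iterations without loss'' ignores that a naive per-step propagation multiplies constants over $O(1/d_n)$ steps and blows up as $n\to\infty$, and that the bound must also be uniform in $T$. The paper closes this by a genuine bootstrap: set $\gamma_T=\sup_{s\le T}m(s)/(s^{-d/2}+1)$, use the Duhamel identity with $r=t_1^n$, bound the error at time $s$ by the \emph{minimum} of the sup-in-$x$ estimate and of $\gamma_T(\kappa_n(s)^{-d/2}+1)\|H(s,t,\cdot)\|_{L^1}$ (this is exactly where $\kappa_n(s)\ge\varepsilon s$ is used, to replace $\kappa_n(s)^{-d/2}$ by $N s^{-d/2}$), and after explicit substitutions obtain $m(t)\le N(1+\gamma_T^{1-\alpha/d})(t^{-d/2}+1)$, hence $\gamma_T\le N(1+\gamma_T^{1-\alpha/d})$, which yields a bound on $\gamma_T$ independent of $T$ and $n$. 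Nothing in your proposal plays the role of this self-improving inequality, and without it (or an equivalent device) neither the $n$-uniformity nor the $T$-uniformity of \eqref{4.4} follows.
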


\begin{proof} The last assertion in ($b$) is true since 
$p_{n}^{q}\leq p_{n}(m_{n})^{q-1}$ and
$\int p_{n}\,dx=1$. To prove ($a$) for $0\leq s\leq t<\infty$ and
 bounded measurable $f(x)$ let
$$ T_{s,t}^{*}f(y):=Ef(y+\int_{s}^{t}\sigma(r,y)\,dw(r)),
$$ and  let the operator $T_{s,t}$ be conjugate to $T_{s,t}^{*}$ in
$L_{2}$-sense. The expression $T_{s,t}f(x)$ can be written as an
integral with respect to a  Gaussian-like density, and from this
formula it is not hard to see that for any $t$ the function
$T_{s,t}f(x)$ is infinitely differentiable
for $s<t$  and
\begin{equation}
                                                    \label{4.5}
 {\partial\over\partial s}T_{s,t}f(x)=-D_{ij}T_{s,t}a^{ij}(s,\cdot)f(\cdot)(x),                
\end{equation}
 where $a_{ij}:=\frac {1}{2} (\sigma\sigma^{ *})_{ij}$. For the sake
of simplicity of notations we drop the subscripts $n$, and from  \eqref{4.5}  
by  the Newton-Leibnitz and It\^o's formulas for any $r\in[0,t]$ we
obtain
$$ Ef(x(t))=\int_{r}^{t}{d\over
ds}ET_{s,t}f(x(s))\,ds+ET_{r,t}f(x(r))= ET_{r,t}f(x(r))
$$
$$
+\int_{r}^{t}E\Big[a^{ij}(s,x(\kappa(s)))D_{ij}
T_{s,t}f(x(s)) -D_{ij}T_{s,t}a^{ij}(s,\cdot)f(\cdot)(x(s))\Big]\,ds.
$$ We take the conditional expectations given $x(\kappa(s))$, and after
denoting 
$$
\eta(s,x)=\int_{\kappa(s)}^{s}\sigma(r,x)\,dw(r)
$$
 we get
\begin{equation}
                                                    \label{4.6}
Ef(x(t))=ET_{r,t}f(x(r))+\int_{r}^{t}EH(s,t,x(\kappa(s)))\,ds,             
\end{equation}
 where
$$
 H(s,t,x)=a_{ij}(s,x)E[D_{ij}T_{s,t}f](x+\eta(s,x))
$$
$$
-E[D_{ij}T_{s,t}a_{ij}(s,\cdot)f(\cdot)](x+\eta(s,x)). 
$$ 
Note that by Lemma 4.1
$$ 
|H(s,t,x)|\leq N(t-s)^{-d/(2p)-1+\alpha/2}\|f\|_{p},
$$
\begin{equation}
                                                    \label{4.7}
\int_{\Bbb{R}^{d}}|H(s,t,x)|\,dx\leq
N(t-s)^{-1+\alpha/2}\|f\|_{1}.                    
\end{equation}
This and  \eqref{4.6}  with $r=0$ give us 
 \eqref{4.3} for $p>d/\alpha$ 
 and for $t\in(0,T]$ with a constant $N_0$ depending only on 
$d,\alpha, K,\varepsilon,q$ and $T$. 
 Indeed (cf.~\eqref{6.11.1}),
$$ T_{0,t}f(x_{0})\leq Nt^{-d/2}\int_{R^{d}}f(y)
\exp\{-{1\over Nt}(x-y)^{2}\}\,dy\leq Nt^{-d/(2p)}\|f\|_{p},
$$
$$
\int_{0}^{t}(t-s)^{-d/(2p)-1+\alpha/2}\,ds=Nt^{-d/(2p)+\alpha/2}.
$$

To prove   \eqref{4.3}  and  \eqref{4.4} with a constant 
$N_0$ independent of $T$   
we need a longer argument. Fix a $T\in(0,\infty)$,
and define $\gamma_{T}$ as the smallest number $\gamma$ such that
$m(s)\leq\gamma(s^{-d/2}+1)$ for all $s\in(0,T]$. Introduction of
such objects as $\gamma_{T}$ is  rather common in the theory of PDE.
In probability theory they were used for instance in
Stroock--Varadhan    \cite{12.}  for the same purposes. Such a number
$\gamma_{T}$ does exist since $m(t)$ is bounded on $[t_1^n,T]$ and
$m(t)\leq N(d,K,\varepsilon)t^{-d/2}$ for $t\in(0,t_1^n)$ as follows
from the explicit formula for the Gaussian density of
$x(t)=x_0+\int_{0}^{t}\sigma(s,x_{0})\,dw(s)$. We want to estimate
$\gamma_{T}$. We use  \eqref{4.6} 
with $r=t^{n}_{1}$ and $t\in[t_1^n,T] $ and observe
that the first term on the right can be easily estimated
if we take into account \eqref{6.11.1} and use that the 
convolution of Gaussian densities is again Gaussian.
We also use
 \eqref{4.7}  and the inequality $\kappa(s)\geq
\varepsilon s$ ($s\geq t^{n}_{1}$) and  we obtain for $t\in[t_1^n,T] $
$$
 Ef(x(t))\leq Nt^{-d/2}\|f\|_{1} + 
\int_{t^{n}_{1}}^{t}\Big[\gamma_{T}\Big({1\over\kappa^{d/2}(s)}+1\Big) 
\|H(s,t,\cdot)\|_{1}\Big]\wedge\sup_{x}|H(s,t,x)| \,ds
$$ 
$$
\leq\Big\{Nt^{-d/2}+
N\int_{t^{n}_{1}}^{t}\Big[\gamma_{T}\Big({1\over\kappa^{d/2}(s)}+1\Big) 
{1\over(t-s)^{1-\alpha/2}}\Big]\wedge{1\over(t-s)^{d/2+1-\alpha/2}}
 \,ds\Big\}\|f\|_{1},
$$
\begin{equation}
\label{4.8}
 m(t)\leq Nt^{-d/2}+
N\int_{0}^{t}\Big[\gamma_{T}\Big({1\over s^{d/2}}+1\Big) 
{1\over(t-s)^{1-\alpha/2}}\Big]\wedge{1\over(t-s)^{d/2+1-\alpha/2}}
\,ds.                                                                       
\end{equation} 
  Next, as is easy to see   after the substitution
$s=u\gamma_T^{-2/d} $,
$$
\int_{0}^{t}{\gamma_{T}\over 
 (t-s)^{1-\alpha/2}}\wedge{1\over(t-s)^{d/2+1-\alpha/2}} \,ds=
\int_{0}^{t}{\gamma_{T}\over 
 s^{1-\alpha/2}}\wedge{1\over s^{d/2+1-\alpha/2}} \,ds
$$
$$
=\gamma_T^{1-\alpha/d}\int_{0}^{t\gamma^{2/d}_{T}}{1\over 
 u^{1-\alpha/2}}\wedge{1\over u^{d/2+1-\alpha/2}} \,du\leq
N\gamma_T^{1-\alpha/d}.
$$ Upon setting  $u=t\gamma_{T}^{2/d}(1+\gamma_{T}^{2/d})^{-1}$, we
also have
$$
\int_{0}^{t}{\gamma_{T}  \over s  ^{d/2} 
(t-s)^{1-\alpha/2}}\wedge{1\over(t-s)^{d/2+1-\alpha/2}} \,ds
\leq\int_{0}^{u}{1\over(t-s)^{d/2+1-\alpha/2}}\,ds
$$
$$
+\int_{u}^{t}{\gamma_{T}  \over s  ^{d/2}  (t-s)^{1-\alpha/2}}
\,ds\leq {2\over (d-\alpha)(t-u)^{d/2-\alpha/2}}+
\gamma_{T}u^{-d/2}{2\over\alpha}(t-u)^{\alpha/2}
$$
$$ 
=Nt^{-(d-\alpha)/2}(1+\gamma_{T}^{2/d})^{(d-\alpha)/2}
\leq N(1+\gamma_{T}^{1-\alpha/d}) (t^{-d/2}+1).
$$ 
Thus from \eqref{4.8}   for $t\in[t_1^n,T]$ we conclude
\begin{equation}                                                         \label{6.14.1}
 m(t)\leq
N(1+\gamma_{T}^{1-\alpha/d})(t^{-d/2}+1).                            
\end{equation} 
As we observed above this estimate is also true for 
$t\in(0,t_1^n]$. By definition of $\gamma_{T}$ estimate \eqref{6.14.1} means
that
$$
\gamma_{T}\leq N(1+\gamma_{T}^{1-\alpha/d}).
$$ We emphasize that the last constant $N$, as well as all constants
called $N$ in the above proof of  \eqref{4.4}, depends only on 
$d,\alpha, K,\varepsilon$. This implies the desired estimate of
$\gamma_{T}$, and it remains only to notice that the estimate
 is independent of $T$.   We can see in the same way that the constant $N_0$ 
 in the estimate \eqref{4.3} can be taken to be the same for all $t>0$. 
The theorem is proved.
\end{proof}

\begin{corollary}
                                               \label{Corollary 4.3}
 Assume the conditions of Theorem \ref{Theorem 2.8}. Let
$x_n(t)$ be the Euler approximation defined by \eqref{6.4.2} and let
$\tau_{n}^{k}$ be the first exit time of $x_{n}(t)$ from $D_{k}$.
Then for every $t>0$ the measure $P(x_{n}(t)\in\Gamma,
t<\tau_{n}^{k})$ has a density $p_{n}^{k}(t,x)$, and for any 
$0<t_0<T<\infty$,
$1\leq q <\frac {d}{d-\alpha}$ and $k=1,2,...$ we have
\begin{equation}                                                                          \label{6.14.2}
\sup_n\sup_{t\in[t_0,T]}\int_{\Bbb{R}^{d}}[p_{n}^{k}(t,x)]^{q}\,dx<\infty.
\end{equation}
\end{corollary}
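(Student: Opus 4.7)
The plan is to reduce to the driftless case via a Girsanov change of measure, apply Theorem \ref{Theorem 4.2}, and transfer the resulting $L_q$ bound to the density of $x_n$ by duality. First, since $D_k$ is bounded and convex by condition $(iv)$, extend $b$ and $\sigma$ to Borel functions $\tilde b,\tilde\sigma$ on $[0,\infty)\times\bR^d$ that coincide with $b,\sigma$ on $[0,\infty)\times D_k$, are bounded in $x$, satisfy the global H\"older condition \eqref{condition 6.17.2} on $\bR^d$ with the same exponent $\alpha$, and retain uniform ellipticity. A preliminary nonrandom time change absorbing $\int_0^{\cdot}M_k(s)\,ds$ normalizes the time-dependent bounds in $(i)$ and $(iv)$ to the constant ones required by Theorem \ref{Theorem 4.2}. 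Let $\tilde x_n$ be the Euler scheme \eqref{6.4.2.2} built from $(\tilde b,\tilde\sigma)$ and started at $\xi$. Since $x_n(\kappa_n(s))\in D_k$ for $s<\tau_n^k$, the trajectories of $x_n$ and $\tilde x_n$ coincide on $[0,\tau_n^k)$, so $P(x_n(t)\in\Gamma,\ t<\tau_n^k)\le P(\tilde x_n(t)\in\Gamma)$; it therefore suffices to establish \eqref{6.14.2} for the unconditional density $p_n(t,\cdot)$ of $\tilde x_n(t)$ under $P$.

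Next, set $Z_t=\cE(-\int_0^t\tilde\sigma^{-1}\tilde b(s,\tilde x_n(\kappa_n(s)))\,dw(s))$ and $d\tilde P=Z_T\,dP$. Under $\tilde P$ the process $\tilde x_n$ is the driftless Euler scheme \eqref{6.17.1} for the coefficient $\tilde\sigma$, and since $\tilde\sigma^{-1}\tilde b$ is bounded on $[0,T]\times\bR^d$ a routine exponential estimate gives that all $\tilde P$-moments of $Z^{-1}$ are bounded uniformly in $n$. Conditioning on $\xi$ (whose law is unchanged by Girsanov) and applying Theorem \ref{Theorem 4.2}(a) gives $\|\tilde p_n^{x_0}(t,\cdot)\|_{L_s}\le N_0(t^{-d/(2s')}+1)$ uniformly in $x_0\in\bR^d$ for each $s\in[1,d/(d-\alpha))$ and conjugate $s'$; integrating over the law of $\xi$ via Minkowski's integral inequality delivers the same bound for the unconditional density $\tilde p_n(t,\cdot)$ of $\tilde x_n(t)$ under $\tilde P$.

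For the duality transfer, take $\phi\in L_{q'}(\bR^d)$ with $\|\phi\|_{L_{q'}}=1$ and $1/q+1/q'=1$, and write $E^P[\phi(\tilde x_n(t))]=E^{\tilde P}[Z^{-1}\phi(\tilde x_n(t))]$. The hypothesis $q<d/(d-\alpha)$ translates into $q'>d/\alpha$, so one may choose $r'\in(1,q'\alpha/d)$ and set $s=q'/(q'-r')\in(q,d/(d-\alpha))$. Applying H\"older's inequality first under $\tilde P$ with exponents $(r,r')$ conjugate to each other, and then on Lebesgue measure to convert $E^{\tilde P}[|\phi|^{r'}(\tilde x_n(t))]$ into the product $\|\phi\|_{L_{q'}}^{r'}\|\tilde p_n(t,\cdot)\|_{L_s}$, yields $|E^P[\phi(\tilde x_n(t))]|\le\|Z^{-1}\|_{L_r(\tilde P)}\|\phi\|_{L_{q'}}\|\tilde p_n(t,\cdot)\|_{L_s}^{1/r'}$, which by the previous step is uniformly bounded in $n$ and $t\in[t_0,T]$. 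Taking the supremum over $\phi$ yields the desired $L_q$ bound on $p_n(t,\cdot)$, and hence on $p_n^k(t,\cdot)\le p_n(t,\cdot)$.

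The delicate point will be the calibration of the H\"older exponents in the third step: one needs $r>1$ for the Girsanov moment bound and $s<d/(d-\alpha)$ for Theorem \ref{Theorem 4.2}(a), and the strict inequality $q<d/(d-\alpha)$ is precisely what provides the single degree of freedom needed to satisfy both constraints simultaneously. A secondary technical issue is the time-dependence of the local bounds $M_k(t)$ in conditions $(i)$ and $(iv)$, which is dealt with by the preliminary nonrandom time change mentioned at the outset.
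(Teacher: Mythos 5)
Your proposal is correct and follows essentially the same route as the paper's (very condensed) proof: a nonrandom time change to normalize $M_k$, replacing the coefficients outside the bounded convex $D_k$ so that the domination $P(x_n(t)\in\Gamma,\,t<\tau_n^k)\le P(\tilde x_n(t)\in\Gamma)$ holds, Girsanov's theorem to reduce to the driftless scheme covered by Theorem \ref{Theorem 4.2}(a), and conditioning on the initial value to pass from fixed to random $\xi$. Your explicit H\"older/duality calibration (choosing $r'\in(1,q'\alpha/d)$, $s=q'/(q'-r')<d/(d-\alpha)$) simply fills in the detail behind the paper's phrase ``an application of the Girsanov theorem allows us to take $b\equiv0$,'' and it checks out.
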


Proof. By using a  nonrandom time change  we easily reduce the
general case to the one with $M_{k}(t)\equiv1$. Next we observe that
$$ P(x_{n}(t)\in\Gamma,t<\tau_{n}^{k})\leq P(x_{n}^{k}(t)\in\Gamma),
$$ where $x_{n}^{k}(t)$ are Euler's approximations for equation \eqref{6.4.2.1}
with coefficients $\sigma,b$ changed arbitrarily outside $D_{k}$. 
After this an application of the Girsanov  theorem allows us to take
$b\equiv0$. Finally we get our assertion  from  \eqref{4.3}  if we notice the
obvious relation between Euler's approximations for fixed initial
value and for  random one.
\qed

\begin{remark}
                                            \label{Remark 4.4} 
One knows from Fabes and Kenig \cite{2.} and Safonov
 \cite{11.}  that none  
of the estimates   \eqref{4.3}, \eqref{4.4}  
and \eqref{6.14.2} remains valid
if  the   H\"older continuity of $\sigma$ in $x$ is replaced  by the
assumption of uniform continuity of $\sigma$ in $(t,x)$, 
\end{remark}

\begin{remark}
                     \label{remark 6.28.1}
We derived Theorem \ref{Theorem 4.2}
for approximations starting at time zero
at  a fixed point. Obviously, the approximations can start at any $t ^n _{k}$ and then we get
a ``conditional'' estimate
\begin{equation}
                              \label{6.28.2}
 E\{f(x_{n}(t)\mid \cF_{t_{k}^{n}}\}
\leq N_{0}((t-t ^n _{k})^{-d/(2p)}+1)\|f\|_{p},
\end{equation}
with the same $N_{0}$ as in \eqref{4.3},
whenever $p>d/\alpha$, $k=0,1,2,...$,
$n=1,2,...$,
$t>t ^n _{k}$, and $f$ is Borel.

\end{remark}

\begin{theorem}
                      \label{theorem 6.28.1}
 Let
$x_n(t)$ be the Euler approximation defined by \eqref{6.17.1}, $ q \geq1$, $p>d/\alpha$.
Assume that condition \eqref{condition 6.17.2} is satisfied
and \eqref{condition 6.19.1}, \eqref{6.27.1} hold.
Then for any 
$T\in[0,\infty)$ and 
Borel functions  $f$  on $\bR_{+}\times\bR^d$ we have 
\begin{equation}                                                                             \label{6.18.2}                                                                      
E\exp\Big(\int_0^Tf(r,x_n(\kappa_n(r)))dr\Big)\leq
2\exp\Big(N\big(
\|f \|_{p, q ,T}^{ q }+d_{n}^{  q -1 }(T)\|f \|_{\infty, q ,T}^{ q }\big)\Big),
\end{equation}
where  $N$ depends only   on $d$, $\alpha$, $K$, 
 $\varepsilon$, $p$, $ q $, 
$\delta(T)$, $T$.
\end{theorem}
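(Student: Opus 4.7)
The proof is a Khasminskii-type exponential moment bound built from a conditional $L^1$ estimate. Replacing $f$ by $|f|$ (which has the same $L_{p,q,T}$ and $L_{\infty,q,T}$ norms), we may assume $f\geq 0$. The central claim is the conditional inequality
\begin{equation}\label{eq:plan:L1}
E\Big[\int_s^T f(r,x_n(\kappa_n(r)))\,dr\,\Big|\,\cF_s\Big]\leq C_1\|f\|_{p,q,T}+C_2\,d_n^{(q-1)/q}(T)\|f\|_{\infty,q,T}
\end{equation}
for every $s\in[0,T]$, where $C_1,C_2$ depend only on $d,\alpha,K,\varepsilon,p,q,\delta(T),T$. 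Given \eqref{eq:plan:L1}, the standard Khasminskii induction on $n$ (peeling off the earliest time $r_1$ and conditioning on $\cF_{r_1}$) yields $E[(\int_s^Tf)^n\mid\cF_s]\leq n!A^n$, where $A$ is the right-hand side of \eqref{eq:plan:L1} with the norms taken over $[s,T]$. Summing the Taylor series gives $E[\exp\int_s^Tf\mid\cF_s]\leq 1/(1-A)$ whenever $A<1$.

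\textbf{Proof of \eqref{eq:plan:L1}.} Let $j_0$ be such that $s\in[t_{j_0}^n,t_{j_0+1}^n)$ and set $j^{*}:=j_0+1+\lceil 1/\delta(T)\rceil$, so that $t_{j^{*}}^n-s\leq N d_n(T)$. Decompose $[s,T]=[s,t_{j^{*}}^n\wedge T]\cup[t_{j^{*}}^n\wedge T,T]$. On the first sub-interval we use the crude pointwise bound $f(r,x_n(\kappa_n(r)))\leq\|f(r,\cdot)\|_\infty$; H\"older's inequality contributes $\leq Nd_n^{(q-1)/q}(T)\|f\|_{\infty,q,T}$. On the second sub-interval, writing $j(r)$ for the index with $\kappa_n(r)=t_{j(r)}^n$, we have $j(r)\geq j_0+2$, so the tower property and Remark \ref{remark 6.28.1} applied at $t_{j_0+1}^n$ give
\[
E[f(r,x_n(t_{j(r)}^n))\mid\cF_s]\leq N_0\bigl((t_{j(r)}^n-t_{j_0+1}^n)^{-d/(2p)}+1\bigr)\|f(r,\cdot)\|_p.
\]
The quasi-uniformity \eqref{condition 6.19.1} together with the choice of $j^{*}$ yields the ratio bound $t_{j(r)}^n-t_{j_0+1}^n\geq c(\delta(T))(r-s)$ for $r\geq t_{j^{*}}^n$. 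H\"older's inequality and the integrability $\int_0^T u^{-dq/(2p(q-1))}du<\infty$, which is precisely \eqref{6.27.1}, then bound the second contribution by $N\|f\|_{p,q,T}$.

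\textbf{Partition argument and main obstacle.} For general $f$, define $\phi(r):=\|f(r,\cdot)\|_p^q+d_n^{q-1}(T)\|f(r,\cdot)\|_\infty^q$ and partition $[0,T]$ greedily into $M$ sub-intervals $[T_{j-1},T_j]$ so that $\int_{T_{j-1}}^{T_j}\phi\leq c_0$, where $c_0:=(4\max(C_1,C_2))^{-q}$; using the elementary inequality $a+b\leq 2(a^q+b^q)^{1/q}$, one checks that the right-hand side of \eqref{eq:plan:L1} computed on each $[T_{j-1},T_j]$ does not exceed $1/2$. Consequently $E[\exp\int_{T_{j-1}}^{T_j}f\mid\cF_{T_{j-1}}]\leq 2$ on each piece, and iterating across the partition gives $E\exp\int_0^T f\leq 2^M$. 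Since $M\leq c_0^{-1}\int_0^T\phi+1\leq N(\|f\|_{p,q,T}^q+d_n^{q-1}(T)\|f\|_{\infty,q,T}^q)+1$, the target inequality $E\exp\int_0^T f\leq 2\exp\bigl(N'(\|f\|_{p,q,T}^q+d_n^{q-1}(T)\|f\|_{\infty,q,T}^q)\bigr)$ follows. The main obstacle is \eqref{eq:plan:L1}: Remark \ref{remark 6.28.1} furnishes conditional density control only at partition points, while Khasminskii's lemma requires conditioning at an arbitrary $s$. The resolution---step forward to $t_{j_0+1}^n$, absorb the $O(d_n)$ transitional window into the supremum-norm term (this is exactly what produces the $d_n^{(q-1)/q}$ in \eqref{eq:plan:L1} and hence the $d_n^{q-1}$ inside the final exponential), and use the quasi-uniformity \eqref{condition 6.19.1} to transfer the density-gap estimate from $\kappa_n(r)-t_{j_0+1}^n$ to $r-s$ so that \eqref{6.27.1} applies cleanly---is the key mechanism of the proof.
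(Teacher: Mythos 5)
Your argument is correct, but it reaches \eqref{6.18.2} by a genuinely different mechanism than the paper. The shared technical core is the same: both proofs split the time integral at the next partition point, absorb the short transitional window by H\"older's inequality against $\|f(\cdot,\cdot)\|_{\infty}$ (producing the $d_n^{(q-1)/q}(T)$, hence $d_n^{q-1}(T)$, term), and control the remaining part through the conditional density estimate of Remark \ref{remark 6.28.1} combined with the quasi-uniformity \eqref{condition 6.19.1} to compare $\kappa_n(r)$-gaps with $r-s$, with \eqref{6.27.1} guaranteeing integrability of $(r-s)^{-dq/(2p(q-1))}$. Where you diverge is in how the first-moment information is converted into an exponential bound: the paper writes $e^{\phi(t)}=1+\int_t^T\psi(r)e^{\phi(r)}\,dr$ for $\phi(t)=\int_t^T f(r,x_n(\kappa_n(r)))\,dr$, conditions on $\cF_{\kappa_n(t)}$, and runs a backward Gronwall inequality for $\Phi^q(t)$ with $\Phi(t)=\esssup E\{e^{\phi(t)}\mid\cF_{\kappa_n(t)}\}$, obtaining the exponential in one stroke; you instead prove a conditional $L^1$ bound at an \emph{arbitrary} time $s$ (handled correctly by stepping forward to $t_{j_0+1}^n$ via the tower property), feed it into Khasminskii's lemma to get $E\{\exp\int_s^{T_j}\mid\cF_s\}\le 2$ on greedily chosen subintervals where the bound is at most $1/2$, and multiply across the $M\le N\big(\|f\|^q_{p,q,T}+d_n^{q-1}(T)\|f\|^q_{\infty,q,T}\big)+1$ pieces, which indeed reproduces the stated constant structure including the factor $2$. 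The paper's route is shorter and avoids the subdivision bookkeeping; yours is more modular — your intermediate conditional estimate is essentially the drift-free first-moment bound that the paper later records separately (cf.\ Proposition \ref{proposition 6.21.1}) — and it avoids manipulating the essential-supremum process $\Phi$, at the cost of the greedy-partition argument and of needing the conditional estimate at arbitrary $s$ rather than only at partition points. One cosmetic remark: your comparability claim $t^n_{j(r)}-t^n_{j_0+1}\ge c(\delta(T))(r-s)$ rests on exactly the same use of \eqref{condition 6.19.1} as the paper's inequality $\kappa_n(r)-\kappa_n(t)\ge \delta(T)(r-t)/2$, so no additional hypothesis is being smuggled in.
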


\begin{proof} We may assume that $f\geq0$ and $f$ is bounded.
Then fix $T$, $n$ and for $t\leq T$ introduce
$$
\psi(r)=f(r,x_n(\kappa_n(r))),\quad
\phi(t)=\int_{t}^{T}\psi(r)\,dr,\quad
\Phi(t)=\esssup E\big\{e^{\phi(t)}\mid \cF_{\kappa_{n}(t)}\big\}.
$$
Observe that
$$
e^{\phi(t)}=1+\int_{t}^{T}\psi(r)e^{\phi(r)}\,dr.
$$
Hence, by taking into account that 
$\psi(r)$ is $\cF_{\kappa_n(r)}$-measurable,  
we get
$$
E\big\{e^{\phi(t)}\mid \cF_{\kappa_{n}(t)}\big\}
=1+\int_{t}^{T}E\{\psi(r)e^{\phi(r)}\mid\cF_{\kappa_{n}(t)} \}\,dr
$$
$$
=1+\int_{t}^{T}E\{\psi(r)E\{e^{\phi(r)}\mid \cF_{\kappa_{n}(r)}\}\mid\cF_{\kappa_{n}(t)}\}\,dr
$$
$$
\leq 1+\int_{t}^{T}\Phi(r)E\{\psi(r)\mid\cF_{\kappa_{n}(t)}\}\,dr 
$$ 
\begin{equation}
                             \label{7.8.1}
\leq 1+  \int_{t}^{\bar\kappa_{n}(t)  \wedge T }\Phi(r) \psi(r)
 \,dr
+ \int_{\bar\kappa_{n}(t) \wedge T }^{ T } \Phi(r)E\big\{\psi(r)
\mid \cF_{\kappa_{n}(t)}\big\}\,dr,
\end{equation}
 where $\bar\kappa_{n}(r)$ is defined to be equal to
  $t^n_{k+ 1} $  if $\kappa_{n}(r)=t ^n _{k}$.  
 Note that for any Borel $h=h(r)\geq0$ and $t\leq r\leq T$
by H\"older's inequality 
(observe that, if $r<t^{n}_{1}$, $x_{n}(\kappa_{n}(r))$ may not have density and this is why we use $\sup$
in notation \eqref{notation 6.21.5})
\begin{equation}
                                 \label{6.28.4}
\int_{r}^{\bar\kappa_{n}(r)}h(s) \psi(s)
 \,ds\leq (\bar\kappa_{n}(r)-r)^{ ( q -1)/ q }
\Big(\int_{r}^{\bar\kappa_{n}(r)}h^{ q }(s) 
\|f(s,\cdot)\|_{\infty}^{ q }
 \,ds\Big)^{1/ q }.
\end{equation}
 Also,
 in light of Remark \ref{remark 6.28.1}
and the fact that 
$\kappa_{n}(r)-\kappa_{n}(t)\geq \delta(T)(r-t) /2 $
for $r\geq \bar\kappa{ _n}(t)$, the last term in \eqref{7.8.1} is dominated by
$$
 N_{0} 2^{d/(2p) }\delta^{-d/ (2p)}(T)\int_{t}^{T}\Phi(r)
\|f(r,\cdot)\|_{p}\big((r-t)^{-d/(2p)}+1\big)\,dr
$$
$$
\leq N \Big(\int_{t}^{T}\Phi^{ q  }(r)
\|f(r,\cdot)\|_{p}^{ q }\,dr\Big)^{1/ q }.
$$    
By using this and \eqref{6.28.4}  we conclude
$$
\Phi(t)\leq 1+N \Big(\int_{t}^{T}\Phi^{ q  }(r)\xi(r)
\,dr\Big)^{1/ q },\quad
\Phi^{ q }(t)\leq 2+N  \int_{t}^{T}\Phi^{ q  }(r)\xi(r)
\,dr ,
$$
where $\xi(r)=\|f(r,\cdot)\|_{p}^{ q }+d_{n}^{  q -1} (T)\|f(r,\cdot)\|_{\infty}^{ q }$. Gronwall's
inequality yields
$$
\Phi^{ q }(t)\leq 2\exp\Big(N \int_{t}^{T}\xi(r)\,dr\Big),
$$
which for $t=0$ implies \eqref{6.18.2} and proves the theorem.
\end{proof}

For fixed 
$T>0$ and $n\geq1$ we denote by $\gamma_n(T)$ the Girsanov exponent 
$$
\gamma_n(T)=\exp\Big(-\int_0^T(\sigma^{-1}b_n)(s,x_n(\kappa_n(s)))dw(s)
-\tfrac{1}{2}\int_0^T|(\sigma^{-1} b_n)(s,x_n(\kappa_n(s)))|^2ds\Big), 
$$
where $x_n(t)$ is the Euler approximation defined by \eqref{6.17.1} 
and $\sigma^{-1}$ is the right inverse of $\sigma$
($\sigma^{-1}=\sigma^{*}(\sigma\sigma^{*})^{-1}$).  Let $\tilde P$ denote  
the probability measure defined by $d\tilde P/dP=\gamma_n(T)$ and use the notation 
$\tilde E$ for the expectation under $\tilde P$.

\begin{proposition}                                                                                            \label{proposition 6.20.1}

Let
$x_n(t)$ be the Euler approximation defined by \eqref{6.21.3}, $ q \geq1$, $p>d/\alpha$.
Assume that conditions \eqref{condition 6.17.2},    
  \eqref{condition 6.19.1}, \eqref{6.27.1} hold.
 Then for any  
$\rho\in\bR$
$$
\tilde E\gamma^{\rho}_n(T)
\leq 2\exp
\Big(
N 
\big(
\|b_n \|_{2p,2 q ,T}^{2 q }+d_{n}^{  q -1} (T)\|b_n \|_{\infty,2 q ,T}^{2 q }
\big)
\Big),
$$
  where   the constant   
$N$    
depends only on $ q $, $p$, $T$, 
$d$, $K$, $\alpha$, $\varepsilon$, $\delta(T)$ and $\rho$.  
\end{proposition}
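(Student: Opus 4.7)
The plan is to pass to the measure $\tilde P$ under which $x_n$ becomes the driftless Euler scheme \eqref{6.17.1}, and then invoke Theorem \ref{theorem 6.28.1}. Under Assumption \ref{assumption 7.5.1} one has $|\sigma^{-1}b_n|^2\le|b_n|^2/\varepsilon$, and $\int_0^T|\sigma^{-1}b_n(s,x_n(\kappa_n(s)))|^2\,ds$ is deterministically bounded by $\varepsilon^{-1}\int_0^T\sup_x|b_n(s,x)|^2\,ds<\infty$, so Novikov's condition holds trivially and $\gamma_n$ is a genuine $P$-martingale. Girsanov's theorem then gives that $\tilde w(t):= w(t)+\int_0^t\sigma^{-1}b_n(s,x_n(\kappa_n(s)))\,ds$ is a $\tilde P$-Wiener process, and substituting $dw=d\tilde w-\sigma^{-1}b_n\,ds$ into \eqref{6.21.3} yields $dx_n=\sigma(t,x_n(\kappa_n(t)))\,d\tilde w(t)$. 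So under $\tilde P$ the process $x_n$ satisfies exactly the driftless scheme \eqref{6.17.1} driven by $\tilde w$, and Theorem \ref{theorem 6.28.1} applies under $\tilde P$.

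The heart of the argument is an algebraic identity. Set $\tilde M_t:=\int_0^t\sigma^{-1}b_n(s,x_n(\kappa_n(s)))\,d\tilde w(s)$ and $[\tilde M]_t=\int_0^t|\sigma^{-1}b_n(s,x_n(\kappa_n(s)))|^2\,ds$. From $dw=d\tilde w-\sigma^{-1}b_n\,ds$ one gets $\log\gamma_n(T)=-\tilde M_T+\tfrac12[\tilde M]_T$, and hence
\[
\gamma_n(T)^\rho=\tilde{\cE}_{2\rho}(T)^{1/2}\exp\!\bigl(\tfrac{2\rho^2+\rho}{2}[\tilde M]_T\bigr),
\]
where $\tilde{\cE}_{2\rho}(t):=\exp(-2\rho\tilde M_t-2\rho^2[\tilde M]_t)$ is the $\tilde P$-stochastic exponential of the $\tilde P$-local martingale $-2\rho\tilde M$. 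The Cauchy--Schwarz inequality under $\tilde P$ then gives
\[
\tilde E\gamma_n(T)^\rho\le\bigl(\tilde E\tilde{\cE}_{2\rho}(T)\bigr)^{1/2}\bigl(\tilde E\exp((2\rho^2+\rho)[\tilde M]_T)\bigr)^{1/2}.
\]

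I then bound each factor. The first is $\le 1$ since $\tilde{\cE}_{2\rho}$ is a non-negative $\tilde P$-local martingale, hence a $\tilde P$-supermartingale with $\tilde{\cE}_{2\rho}(0)=1$. For the second, the case $2\rho^2+\rho\le 0$ is trivial (bound by $1$); otherwise I apply Theorem \ref{theorem 6.28.1} (which is legitimate since $x_n$ is the driftless scheme \eqref{6.17.1} under $\tilde P$) with $f(s,x)=(2\rho^2+\rho)|\sigma^{-1}b_n(s,x)|^2\ge0$, together with $|\sigma^{-1}b_n|^2\le|b_n|^2/\varepsilon$, to get a bound of the form $2\exp(N(\||b_n|^2\|_{p,q,T}^{q}+d_n^{q-1}(T)\||b_n|^2\|_{\infty,q,T}^{q}))$ with $N$ depending on $\rho,p,q,T,\varepsilon,\delta(T),d,K,\alpha$. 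The elementary identities $\||b_n|^2\|_{p,q,T}^{q}=\|b_n\|_{2p,2q,T}^{2q}$ and $\||b_n|^2\|_{\infty,q,T}^{q}=\|b_n\|_{\infty,2q,T}^{2q}$ (immediate from \eqref{notation 6.21.5}) translate this into the announced form, and combining the two factors while absorbing $\sqrt 2<2$ yields the proposition.

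The main obstacle to avoid is a naive splitting $\gamma_n^\rho=\tilde{\cE}_\rho\cdot\exp(\tfrac{\rho^2+\rho}{2}[\tilde M])$: the corresponding Cauchy--Schwarz produces $\tilde E\tilde{\cE}_\rho^2=\tilde E[\tilde{\cE}_{2\rho}\exp(\rho^2[\tilde M])]$, which is not majorized by $1$ and spawns an infinite recursion whose coefficients explode when $q>1$. The trick is instead to extract the factor $\tilde{\cE}_{2\rho}^{1/2}$, so that the supermartingale $\tilde{\cE}_{2\rho}$ itself appears under the expectation; then a single application of Cauchy--Schwarz combined with Theorem \ref{theorem 6.28.1} closes the estimate.
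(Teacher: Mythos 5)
Your proof is correct and follows essentially the same route as the paper: after Girsanov reduces $x_n$ to the driftless scheme under $\tilde P$, your factorization $\gamma_n^{\rho}=\tilde{\mathcal E}_{2\rho}^{1/2}\exp\bigl(\tfrac{2\rho^2+\rho}{2}[\tilde M]_T\bigr)$ is algebraically the same as the paper's splitting $\gamma_n^{\rho}=H\exp(-\rho\tilde M_T-\rho^2[\tilde M]_T)$ with $H=\exp((\tfrac{\rho}{2}+\rho^2)[\tilde M]_T)$, and both conclude by Cauchy--Schwarz (the supermartingale factor bounded by $1$) together with Theorem \ref{theorem 6.28.1} applied to $(|\rho+2\rho^2|/\varepsilon)|b_n|^2$. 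Your explicit handling of Novikov's condition and of the sign of $2\rho^2+\rho$ only makes precise points the paper leaves implicit.
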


\begin{proof} We may assume that $x_n(0)=\xi$ is non random.
Notice that 
$$
dx_n(t)=\sigma(t,x_n(\kappa_n(t)))d\tilde w(t)
$$
with 
$$
\tilde w(t)=\int_0^t(\sigma^{-1}b_n)(s,x_n(\kappa_n(s))ds+w(t),
\quad t\in[0,T], 
$$
which is a Wiener process under $\tilde P$ by Girsanov's theorem. Thus 
setting $h_s=(\sigma^{-1}b_n)(s,x_n(\kappa_n(s))$, by simple calculations 
we obtain
$$
\tilde E\gamma^{\rho}_n(T)
=\tilde EH\exp\Big(-\rho\int_0^Th_sd\tilde w_s-\rho^2\int_0^T|h_s|^2ds\Big)
$$
with 
$$
H=\exp\Big((\tfrac{\rho}{2}+\rho^2)\int_0^T|h_s|^2ds\Big). 
$$
Hence by Cauchy-Bunjakovski-Schwarz inequality and using 
Theorem \ref{theorem 6.28.1} we obtain 
$$
\tilde E\gamma^{\rho}_n(T)\leq (\tilde EH^2)^{1/2}
\leq \Big(\tilde E\exp\Big(|\rho+2\rho^2|\int_0^T|b(s,x_n(\kappa_n(s)))|^2ds\Big)\Big)^{1/2}
$$
$$
\leq \sqrt{2}\exp\Big(
N 
\big(
\|b_n \|_{2p,2 q ,T}^{2 q }+d_{n}^{  q -1 }(T)\|b_n \|_{\infty,2 q ,T}^{2 q }
\big)
\Big)
$$
with $N$ depending only on $ q $, $p$, $T$, $d$, $K$, $\alpha$, 
$\varepsilon$, $\delta(T)$ and $\rho$.
\end{proof}
 
\begin{remark}
                             \label{remark 6.30.1}
Observe that, if 
 Assumption 
\ref{assumption 7.5.1} is  satisfied, then for any $\rho$ and $T$
the sequence $\tilde E \gamma^{\rho}_n(T) $ is bounded.
\end{remark}

\begin{proposition}                                                                   \label{proposition 6.21.1}
Suppose that Assumption 
\ref{assumption 7.5.1} is  satisfied and let
$x_n(t)$ be the Euler approximation defined by \eqref{6.21.3}.
Then for any   $\gamma<( q -1)/ q $, $T$, $n$ and bounded Borel functions 
$f\geq0$ on $[0,\infty)\times\bR^d$ we have   
\begin{equation}                                                                        \label{estimate 6.22.2}
E\int_0^Tf(t,x_n(\kappa_n(t)))\,dt\leq N(\|f\|_{p, q ,T}+d_n^{\gamma}(T)\|f\|_{\infty, q ,T})
\end{equation}
with a constant $N$ depending only on $ q $, $p$, $\gamma$, $T$, 
$d$, $K$, $\alpha$, $\delta(T)$,
$B(T)$ and $\varepsilon$. 
\end{proposition}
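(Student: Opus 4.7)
\emph{Approach.} The plan is to apply the Girsanov transformation of Proposition \ref{proposition 6.20.1} to remove the drift $b_n$ from $x_n$, reducing to a drift-free Euler approximation for which the density estimate of Theorem \ref{Theorem 4.2}(a) is available. Under $\tilde P$ with $d\tilde P/dP=\gamma_n(T)$, the process $\tilde w_t=w_t+\int_0^t(\sigma^{-1}b_n)(s,x_n(\kappa_n(s)))\,ds$ is a Wiener process independent of $\cF_0$, and $dx_n(t)=\sigma(t,x_n(\kappa_n(t)))\,d\tilde w(t)$. Conditionally on $\xi$, $x_n$ under $\tilde P$ is thus exactly of the form in Theorem \ref{Theorem 4.2}, with bounds independent of the starting point.

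\emph{Main computation.} Writing $E\int_0^T f(t,x_n(\kappa_n(t)))\,dt=\tilde E[\gamma_n^{-1}(T)\int_0^T f\,dt]$ and applying H\"older with conjugate exponents $a,b>1$,
\begin{equation*}
E\int_0^T f(t,x_n(\kappa_n(t)))\,dt \le \bigl(\tilde E\gamma_n^{-a}(T)\bigr)^{1/a}\Bigl(\tilde E\bigl(\int_0^T f(t,x_n(\kappa_n(t)))\,dt\bigr)^{b}\Bigr)^{1/b}.
\end{equation*}
The first factor is bounded uniformly in $n$ by Proposition \ref{proposition 6.20.1} and Remark \ref{remark 6.30.1}. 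For the second, Jensen gives $(\int_0^T f\,dt)^b\le T^{b-1}\int_0^T f^b\,dt$, so it suffices to bound $\int_0^T \tilde E f^b(t,x_n(\kappa_n(t)))\,dt$. Split at $t_1^n$. On $[0,t_1^n)$ we have $x_n(\kappa_n(t))=\xi$, so $\tilde E f^b(t,\xi)\le\sup_x f^b(t,x)$, and H\"older in time with exponent $q/b>1$ yields a contribution of order $d_n(T)^{(q-b)/q}\|f\|_{\infty,q,T}^{b}$. On $[t_1^n,T]$, condition on $\xi$ and apply Theorem \ref{Theorem 4.2}(a) with $p_0=p/b$, whose conjugate $q_0$ lies in $[1,d/(d-\alpha))$ provided $b<p\alpha/d$, to obtain
\begin{equation*}
\tilde E f^b(t,x_n(\kappa_n(t)))\le N(\kappa_n(t)^{-d/(2p_0)}+1)\|f(t,\cdot)\|_p^{b}.
\end{equation*}
Condition \eqref{condition 6.19.1} yields $\kappa_n(t)\ge(\delta(T)/2)\,t$ on $[t_1^n,T]$, and a further H\"older in time with exponents $(q/b,\,q/(q-b))$ bounds this contribution by $N\|f\|_{p,q,T}^{b}$, provided $\int_{t_1^n}^T\kappa_n(t)^{-dq/(2p_0(q-b))}\,dt<\infty$, equivalently $b<2/(d/p+2/q)$.

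\emph{Matching exponents.} Combining the two pieces and taking $1/b$-th powers gives
\begin{equation*}
E\int_0^T f(t,x_n(\kappa_n(t)))\,dt\le N\bigl(\|f\|_{p,q,T}+d_n(T)^{(q-b)/(qb)}\|f\|_{\infty,q,T}\bigr).
\end{equation*}
For a given $\gamma<(q-1)/q$, choose $b=q/(1+q\gamma)$, so that $(q-b)/(qb)=\gamma$ and $b>1$; both constraints $b<p\alpha/d$ and $b<2/(d/p+2/q)$ hold for $\gamma$ sufficiently close to $(q-1)/q$ (the first by $p>d/\alpha$, the second by \eqref{6.27.1}), and the estimate for smaller $\gamma$ follows by absorbing the bounded factor $d_n(T)^{(q-1)/q-\gamma}$ into the constant. \emph{The principal obstacle} is exactly this exponent balancing: the strict inequality $d/p+2/q<2$ in Assumption \ref{assumption 7.5.1} is precisely the slack needed to choose $b>1$ while keeping the singular time integral $\int_{t_1^n}^T\kappa_n(t)^{-dq/(2p_0(q-b))}\,dt$ convergent; without it, one of the two H\"older integrations would fail.
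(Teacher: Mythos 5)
Your proof is correct and follows essentially the same route as the paper: split at $t_1^n$, use the density bound of Theorem \ref{Theorem 4.2}(a) together with $\kappa_n(t)\geq\delta(T)t/2$ for the drift-free dynamics, and handle the drift by Girsanov's theorem, H\"older's inequality and the exponential moment bound of Proposition \ref{proposition 6.20.1} (via Remark \ref{remark 6.30.1}). The only difference is cosmetic: you apply Theorem \ref{Theorem 4.2} directly under $\tilde P$ and carry out explicitly the exponent matching ($b=q/(1+q\gamma)$, with $p>d/\alpha$ and \eqref{6.27.1} providing the needed slack) that the paper's proof leaves implicit in its choice of $\rho,p',q'$.
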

\begin{proof} First assume $b_n=0$. Clearly, 
$$
E\int_0^T f(t, x_n(\kappa_n(t))dt=E\int_0^{t_1^n \wedge T  } f(t, x_n(\kappa_n(t))dt
+
E\int_{t_1^n \wedge T }^{T }  f(t, x_n(\kappa_n(t))dt,
$$
where the first term can estimated from above by 
$ d_n^{( q -1)/ q }(T)\|f\|_{\infty, q ,T}$. For the second term 
by Theorem \ref{Theorem 4.2} and taking into account that 
$\kappa_n(t)\geq \delta(T)t/2$ for 
$t\geq t^n_1$, we have 
$$ 
E\int_{t_1^n \wedge T }^{T } f(t, x_n(\kappa_n(t))dt
\leq N_0\int_{  t_1^n\wedge T }^{T }(\kappa_n^{-d/2p}(t)+1)\|f(t)\|_{p}^p\,dt
$$
$$
\leq 2^{d/(2p)}\delta^{-d/(2p)}(T)N_0\int_0^T(t^{-d/2p}+1)\|f(t)\|_{p}^p\,dt. 
$$
Hence by H\"older's inequality we get \eqref{estimate 6.22.2}. In the general case 
we use Girsanov's theorem, H\"older's inequality and 
Proposition \ref{proposition 6.20.1} to get 
$$
E\int_0^Tf(t,x_n(\kappa_n(t)))dt
=(\tilde E\gamma_{n}^{-\rho/(\rho-1)}(T))^{1/\rho}
\Big(\tilde E\Big(\int_0^Tf(t,x_n(\kappa_n(t)))dt\Big)^{\rho}\Big)^{1/\rho}
$$
$$
\leq N\Big(\tilde E\int_0^Tf^{\rho}(t,x_n(\kappa_n(t)))\,dt\Big)^{1\rho}
\leq  N'(\|f\|_{\rho p',\rho q ',T}+d_n^{( q '-1)/( q '\rho)}(T)\|f\|_{\infty,\rho q ',T}) 
$$
for any $\rho>1$, $ q '>1$,  $p'>\tfrac{d}{2}\tfrac{ q '}{ q '-1}\vee\frac{d}{\alpha}$ 
which proves the proposition. 
\end{proof} 

\begin{proposition}  
                      \label{proposition 6.24.2}
                      
Under the assumptions of Proposition 
\ref{proposition 6.21.1} suppose that the processes
  $x_n(t)$ converge  to a 
process 
$ x(t) $ in probability, uniformly in $t$ in bounded intervals. 
Then for nonnegative Borel functions $h$ on $[0,\infty)\times\bR^d$ for each $T$ 
we have 
\begin{equation}                                                                        \label{estimate 6.24.1}
E\int_0^Th(t,x(t))\,dt\leq N\|h\|_{p, q ,T}
\end{equation}
for $ q >1$, $p>(\tfrac{d}{2}\tfrac{ q }{ q -1})\vee\tfrac{d}{\alpha}$ 
with a constant depending on $ q $, $p$, $T$, 
$\delta(T)$,
$d$, $K$, $\alpha$ and $\varepsilon$. 
\end{proposition}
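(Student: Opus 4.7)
The plan is to establish the inequality first for nonnegative continuous $h$ of compact support by passing to the limit in Proposition \ref{proposition 6.21.1} applied to the Euler approximations $x_n$, and then to extend to arbitrary nonnegative Borel $h\in L_{p,q}(T)$ via an absolute continuity argument for the occupation measure of $x(t)$.

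First, since $x_n\to x$ in probability uniformly on $[0,T]$, one can pass to a subsequence (still denoted $x_n$) along which $x_n\to x$ uniformly on $[0,T]$ almost surely, and the limit $x$ has continuous paths a.s. Combined with $|\kappa_n(t)-t|\leq d_n(T)\to0$, this yields $x_n(\kappa_n(t))\to x(t)$ uniformly in $t\in[0,T]$ almost surely. For any nonnegative continuous $h$ of compact support in $[0,T]\times\bR^d$, continuity and the uniform a.s.\ convergence give $h(t,x_n(\kappa_n(t)))\to h(t,x(t))$ uniformly in $t$ a.s.; since $h$ is bounded, the bounded convergence theorem yields
$$
E\int_0^T h(t,x_n(\kappa_n(t)))\,dt\to E\int_0^T h(t,x(t))\,dt.
$$
Proposition \ref{proposition 6.21.1} bounds the left-hand side by $N(\|h\|_{p,q,T}+d_n^{\gamma}(T)\|h\|_{\infty,q,T})$, and since $h$ has compact support $\|h\|_{\infty,q,T}<\infty$, so the extra term vanishes as $n\to\infty$. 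This proves $E\int_0^T h(t,x(t))\,dt\leq N\|h\|_{p,q,T}$ for every such $h$.

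To extend to arbitrary nonnegative Borel $h\in L_{p,q}(T)$, introduce the positive Borel measure $\mu$ on $[0,T]\times\bR^d$ given by $\mu(A)=E\int_0^T I_A(t,x(t))\,dt$. The previous step reads $\int h\,d\mu\leq N\|h\|_{p,q,T}$ for nonnegative continuous $h$ of compact support. Approximating open sets from below by such functions (monotone convergence) and arbitrary Borel sets from outside by open sets, the inequality persists for Borel indicators, which shows $\mu\ll dt\otimes dx$. For bounded Borel $h$ of compact support, pick a uniformly bounded sequence $h_k$ of nonnegative continuous functions of compact support with $h_k\to h$ a.e.\ $[dt\otimes dx]$ (hence $\mu$-a.e.\ by absolute continuity); bounded convergence gives $\int h_k\,d\mu\to\int h\,d\mu$, while $\|h_k\|_{p,q,T}\to\|h\|_{p,q,T}$ by dominated convergence, so the bound transfers to $h$. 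Finally, monotone convergence applied to $h\wedge k\cdot I_{|x|\leq k}$ yields the estimate for general nonnegative Borel $h\in L_{p,q}(T)$.

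The main obstacle is the extension from continuous to general Borel $h$, which hinges on establishing the absolute continuity of $\mu$; once this is in place, the remaining approximation steps are routine applications of the monotone and dominated convergence theorems.
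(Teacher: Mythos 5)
Your proposal is correct and follows essentially the same route as the paper: pass to the limit $n\to\infty$ in the estimate of Proposition \ref{proposition 6.21.1} for nonnegative continuous compactly supported $h$ (the $d_n^{\gamma}(T)\|h\|_{\infty,q,T}$ term vanishing), and then extend to general nonnegative Borel $h$ by a standard approximation/absolute-continuity argument for the occupation measure, which is exactly what the paper compresses into ``hence the general case follows.'' Your extension step is fine as sketched, though when citing outer approximation by open sets one should localize to bounded sets and use decreasing open neighborhoods so that dominated convergence controls the mixed norms (relevant when $q>p$); this is a routine fix.
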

\begin{proof}
Letting  $n\to\infty$ in \eqref{estimate 6.22.2} we get \eqref{estimate 6.24.1} when $h$ is a 
continuous function with compact support. Hence the general case follows. 
\end{proof}

\begin{proposition}                                                                         \label{proposition 6.24.6}

Under the assumptions of Proposition 
\ref{proposition 6.21.1}
for each $T>0$ the sequence of processes 
 $x_n(t)$  is  tight 
 in 
$C([0,T],\bR^d)$. 
\end{proposition}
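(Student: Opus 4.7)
The plan is to establish the Kolmogorov-type moment bound
$$
E|x_n(t)-x_n(s)|^{2 q }\le C(t-s)^{ q },\qquad 0\le s\le t\le T,
$$
with $C$ independent of $n$. Since $ q >1$ by Assumption \ref{assumption 7.5.1} and $x_n(0)=\xi$ is the same for every $n$, Kolmogorov's continuity criterion then yields the tightness of $\{x_n\}$ in $C([0,T],\bR^{d})$. The main challenge is the singular drift $b_n$, which will be neutralised via a Girsanov change of measure.

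Set $d\tilde P/dP=\gamma_n(T)$. By Girsanov's theorem the process
$$
\tilde w(t):=w(t)+\int_0^t(\sigma^{-1}b_n)(r,x_n(\kappa_n(r)))\,dr
$$
is a Wiener process under $\tilde P$, and the Euler approximation becomes a drift-free stochastic integral,
$$
x_n(t)-x_n(s)=\int_s^t\sigma(r,x_n(\kappa_n(r)))\,d\tilde w(r).
$$
Since $\sigma\sigma^{*}\le KI$, the quadratic variation of this $\tilde P$-martingale over $[s,t]$ is dominated (in trace) by $Kd(t-s)$, so the Burkholder--Davis--Gundy inequality under $\tilde P$ gives
$$
\tilde E|x_n(t)-x_n(s)|^{4 q }\le C_{ q }(t-s)^{2 q },
$$
with $C_{ q }$ depending only on $ q $, $d$ and $K$.

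It remains to change measure back to $P$. By Cauchy--Schwarz applied under $\tilde P$,
$$
E|x_n(t)-x_n(s)|^{2 q }=\tilde E\bigl[\gamma_n^{-1}|x_n(t)-x_n(s)|^{2 q }\bigr]\le (\tilde E\gamma_n^{-2})^{1/2}\bigl(\tilde E|x_n(t)-x_n(s)|^{4 q }\bigr)^{1/2}.
$$
Proposition \ref{proposition 6.20.1} with $\rho=-2$, together with Remark \ref{remark 6.30.1}, supplies $\sup_n\tilde E\gamma_n^{-2}<\infty$ under Assumption \ref{assumption 7.5.1}; combining this with the BDG bound yields the desired estimate and hence, via Kolmogorov--Chentsov, tightness in $C([0,T],\bR^{d})$.

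The substantive technical step is the combined use of these two ingredients: the clean $\tilde P$-martingale structure of $x_n$ (which side-steps the singularity of $b_n$) and the uniform control of $\tilde E\gamma_n^{-\rho}$ provided by Proposition \ref{proposition 6.20.1}. Once these are in hand, no further analysis on the modulus of continuity is required, as BDG under $\tilde P$ automatically yields an exponent $ q >1$ on $|t-s|$.
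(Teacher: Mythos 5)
Your proposal is correct and follows essentially the same route as the paper: change measure via $\gamma_n$, use BDG under $\tilde P$ where $x_n$ is a drift-free stochastic integral with bounded diffusion, return to $P$ by Cauchy--Schwarz, and invoke Proposition \ref{proposition 6.20.1} and Remark \ref{remark 6.30.1} plus Kolmogorov's criterion. The only cosmetic difference is that the paper works with the fixed exponents $E|x_n(t)-x_n(s)|^4\le N|t-s|^2$ rather than your $2q$, $4q$ (and your sign choice $\tilde E\gamma_n^{-2}$ is the natural one, covered in any case since Proposition \ref{proposition 6.20.1} holds for every $\rho\in\bR$).
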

\begin{proof}
By changing measure, then using H\"older's inequality 
and Proposition \ref{proposition 6.20.1} 
we have 
$$
E|x_n(t)-x_n(s)|^4\leq\Big(\tilde E\gamma_{n}^{2}(T)\Big)^{1/2}
\Big(\tilde E|x_n(t)-x_n(s)|^{8}\Big)^{1/2}\leq N|t-s|^2
$$
for all $s,t\in[0,T]$, where the constant $N$ is independent of $s,t,n$ in light of Remark
\ref{remark 6.30.1}. This proves 
the proposition. 
\end{proof}
 
\mysection{Proof of Theorem \ref{Theorem 2.8}}

The reader can easily check that we can repeat the proof of
Theorem \ref{Theorem 2.4} given in Section  \ref{section 3},  
if we prove the following version of Lemma \ref{Lemma 3.1}.  
We use the same notations as in Section  \ref{section 3}.
\begin{lemma}
                                              \label{Lemma 5.1}
 Let $f(s,x)$ be a  Borel function defined on
$\Bbb{R}_{+}\times\Bbb{R}^{d}$ such that
$|f(t,x)|$ $\leq$ $ M_{k}(t)$ for any $k$ and $x\in D_{k}$. Then for
any $i=1,...,d_{1}$ the first two convergences in (3.6) hold as
$j\to\infty$ uniformly in $t\in[0,T\wedge\tilde{\tau}^{k})$ in   
probability for any $T<\infty$. If $|f(t,x)|^{2}\leq M_{k}(t)$ for
any $k$ and $x\in D_{k}$ and $t\leq k$, then for any $i=1,...,d_{1}$ the last two
convergences (3.6) also hold as $j\to\infty$ uniformly in
$t\in[0,T\wedge\tilde{\tau}^{k})$ in probability for any $T<\infty$.
\end{lemma}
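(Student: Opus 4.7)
The strategy is to approximate $f$ by functions $f_\varepsilon$ that are continuous in $x$ and bounded, apply Lemma \ref{Lemma 3.1} to each $f_\varepsilon$, and control the resulting error uniformly in $j$ and in the limit by means of the density bound of Corollary \ref{Corollary 4.3}. Throughout I fix $k$, $T$, $i$. Since all integrands are stopped at $\tilde\tau^k_{n(j)}$ (respectively $\tilde\tau^k$), only the values of $f$ on $[0,T]\times D_k$ enter, where $|f|\leq M_k(t)$ (respectively $|f|^2\leq M_k(t)$); hence I may multiply $f$ by a cutoff and assume $f$ is bounded and supported in a bounded neighborhood of $\bar D_k$.

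First I pick exponents $q\in(1,d/(d-\alpha))$ and $p=q/(q-1)>d/\alpha$, and for each $\varepsilon>0$ I produce by standard mollification in $x$ a function $f_\varepsilon(s,x)$ that is Borel in $s$, continuous and bounded in $x$, with
$$\int_0^T\|f(s,\cdot)-f_\varepsilon(s,\cdot)\|_{L^p(\bR^d)}\,ds<\varepsilon.$$
Corollary \ref{Corollary 4.3} supplies, for each $t_0>0$, a constant $C_{t_0}$ with $\sup_j\sup_{s\in[t_0,T]}\|p^k_{n(j)}(s,\cdot)\|_{L^q}\leq C_{t_0}$. Because $\tilde x^k_{n(j)}\to\tilde x^k$ uniformly almost surely and $\liminf_j\tilde\tau^k_{n(j)}\geq\tilde\tau^k$, Fatou's lemma, applied first to nonnegative bounded continuous test functions and then extended to nonnegative Borel functions by monotone convergence, transfers the bound to the limit: for every nonnegative Borel $\varphi$ and every $s\in[t_0,T]$,
$$E\bigl[\varphi(\tilde x^k(s))\,\mathbf{1}_{s<\tilde\tau^k}\bigr]\leq C_{t_0}\|\varphi\|_{L^p(\bR^d)}.$$
An analogous bound, with a possibly larger constant, holds at the shifted points $\kappa_{n(j)}(s)$ for $s$ not too small by Corollary \ref{Corollary 4.3} itself.

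Combining these ingredients, for every $j$ and every $\varepsilon,t_0>0$,
$$E\int_{t_0}^{T\wedge\tilde\tau^k_{n(j)}}|f-f_\varepsilon|(s,\tilde x^k_{n(j)}(s))\,ds\leq C_{t_0}\varepsilon,$$
and the analogous inequality holds with $\tilde x^k$ in place of $\tilde x^k_{n(j)}$ and with the shifted argument $\kappa_{n(j)}(s)$ inside; the integral over $[0,t_0]$ is bounded by $2\int_0^{t_0}M_k(s)\,ds$, which is small with $t_0$ by local integrability of $M_k$. Since $f_\varepsilon$ is continuous in $x$ and bounded, Lemma \ref{Lemma 3.1} yields all four convergences in \eqref{3.6} with $f$ replaced by $f_\varepsilon$, uniformly in $t\in[0,T]$ in probability; combined with the $L^1$-type error bound, an $\varepsilon\to 0$ and $t_0\to 0$ argument delivers the first two convergences of \eqref{3.6} for the original $f$, uniformly in $t\in[0,T\wedge\tilde\tau^k)$ in probability. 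For the stochastic integral versions, Doob's maximal inequality and It\^o's isometry reduce matters to estimating $E\int|f-f_\varepsilon|^2$; under the assumption $|f|^2\leq M_k$ on $D_k$ we use $|f-f_\varepsilon|^2\leq C|f-f_\varepsilon|$ to fall back on the same $L^1$ bound. The main obstacle is the transfer of the density bound past the stopping time $\tilde\tau^k$ to the limit process $\tilde x^k$, which is exactly what the Fatou step above achieves; the only other delicate point is the behaviour near $t=0$, where the Corollary \ref{Corollary 4.3} bound degenerates but is absorbed into the small integral $\int_0^{t_0}M_k(s)\,ds$.
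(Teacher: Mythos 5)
Your proposal is correct and follows essentially the same route as the paper's own proof: approximate $f$ by a continuous-in-$x$ function, apply Lemma \ref{Lemma 3.1} to it, and control the error terms via the density estimate of Corollary \ref{Corollary 4.3} (transferred to the limit process exactly as the paper does), splitting off a small initial time interval where the bound degenerates and using martingale/maximal inequalities for the stochastic-integral terms. The only blemish is the claim that a spatial cutoff makes $f$ bounded and the ensuing step $|f-f_\varepsilon|^2\leq C|f-f_\varepsilon|$ (the bound $M_k(t)$ is only locally integrable in $t$), but this is repaired by the paper's standard nonrandom time change reducing to $M_k\equiv 1$, so in substance your argument coincides with the one in the paper.
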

 
\begin{proof} 
We will prove only the last relation in (3.6). The other
ones are considered in like manner. Take a continuous in $x$ Borel in
$t$
 function $g(t,x)$ defined on $\Bbb{R}_{+}\times\Bbb{R}^{d}$
satisfying the same hypotheses as $f$, and define
$$
I_{t}^{kj}(g)=\int_0^t g(s,\tilde
x_{n(j)}^{k}(\kappa_{n(j)}(s)))\,d\tilde w_{j}^{i}(s),\,\,\,
I_{t}^{k}(g)=\int_0^t g(s,\tilde x^{k}( s ))\,d\tilde w^{i}(s).
$$
Owing to Lemma \ref{Lemma 3.1} for any $\delta>0$ we have
\begin{equation}                                                                    \label{6.14.3}
\limsup_{j\to\infty}
P(\sup\{|I_{t}^{kj}(f)-I_{t}^{k}(f)|:\,
t<T\wedge\tilde{\tau}^{k}\}\geq
3\delta)
\end{equation}
$$
\leq\limsup_{j\to\infty}
P(\sup\{|I_{t}^{kj}(f-g)|:\,
t<T\wedge\tilde{\tau}^{k}\}\geq  \delta)
$$
$$
+ P(\sup\{|I_{t}^{k}(f-g)|:\,t<T\wedge\tilde{\tau}^{k}\}\geq 
\delta)=:J_{1}+J_{2}.       
$$ Now, by virtue of (3.4) and well-known martingale inequalities 
$$
J_{1}\leq\gamma^{-1}\limsup_{j\to\infty}E\int_{0}^{T\wedge\tilde{\tau}^{k}_{n(j)}
} |f-g|^{2}(s,\tilde
x_{n(j)}^{k}(\kappa_{n(j)}(s)))\,ds+{\gamma\over\delta^{2}}
$$
$$
\leq 4\gamma^{-1}\int_{0}^{\eta}M_{k}(s)\,ds+
\gamma^{-1}\limsup_{j\to\infty}\int_{\eta}^{T} E|(f-g)I_{D_{k}}|^{2}(s,\tilde
x_{n(j)}^{k}(\kappa_{n(j)}(s))) \,ds+{\gamma\over\delta^{2}},
$$ 
where $\gamma>0$ and $\eta>0$ are arbitrary numbers. By Corollary
4.3 we  conclude that for $p$ large enough
$$ 
J_{1}\leq
4\gamma^{-1}\int_{0}^{\eta}M_{k}(s)\,ds+{\gamma\over\delta^{2}}+
N\gamma^{-1}\Big[\int_{0}^{T}\int_{D_{k}}|f-g|^{2p}(s,x)\,dxds\Big]^{1/p}
$$ 
with $N$ independent of $g$. Since
$\tilde{x}^{k}_{n(j)}(t)\to\tilde{x}^{k}(t)$ (a.s.) from Corollary \ref{Corollary 4.3}
we also get an estimate for probability density of $x^{k}(t)$,
and this estimate shows that $J_{2}$ can be estimated by the  same
quantity as $J_{1}$. Thus we obtain an estimate for the first limit
in \eqref{6.14.3}, and this  estimate along with the freedom of choice of
$g,\eta,\gamma$
 shows that the limit in question is zero. This brings to an end the
proofs of Lemma \ref{Lemma 5.1} and Theorem \ref{Theorem 2.8}. 
\end{proof}
 
\mysection{Proof of Theorem \ref{theorem 6.21.5}}                                                         \label{Section 6}

To prove Theorem \ref{theorem 6.21.5} notice that by 
Proposition \ref{proposition 6.24.6} the sequence of Euler approximations 
$(x_n)_{n=1}^{\infty}$ is  tight  in $C([0,T],\bR^d)$. Thus to prove the theorem we need only prove  
the following lemma.

\begin{lemma}                                                                \label{lemma 6.24.10}
Let  Assumption 
\ref{assumption 7.5.1}  hold. 
Assume there exists a  process $x(t)$ such that for each $T>0$ 
$$
\lim_{n\to\infty}P(\sup_{t\in[0,T]}|x_n(t)-x(t)|\geq \varepsilon)=0
\quad\text{for each $\varepsilon>0$}.  
$$ 
Then for each $T>0$   
$$
\lim_{n\to\infty}E\int_0^T|b_n(t,x_n(\kappa_n(t)))-b(t,x(t))|^{2}\,dt=0.
$$
\end{lemma}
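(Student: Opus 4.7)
The plan is to insert a smooth, bounded, compactly supported approximation $\phi_{\varepsilon}$ of $b$ in $L_{2p,2 q ,T}$ (which exists by density, since $p, q<\infty$ by Assumption \ref{assumption 7.5.1}(2)) and split the integrand by the triangle inequality into three pieces:
\begin{equation*}
|b_n(t,x_n(\kappa_n(t)))-b(t,x(t))|^{2}\leq 3\Sigma_{1}+3\Sigma_{2}+3\Sigma_{3},
\end{equation*}
with $\Sigma_{1}=|b_n-\phi_{\varepsilon}|^{2}(t,x_n(\kappa_n(t)))$, $\Sigma_{2}=|\phi_{\varepsilon}(t,x_n(\kappa_n(t)))-\phi_{\varepsilon}(t,x(t))|^{2}$, and $\Sigma_{3}=|b-\phi_{\varepsilon}|^{2}(t,x(t))$. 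The rest of the argument will consist of bounding each of the three integrated expectations by a quantity that vanishes as $\varepsilon\to 0$ and $n\to\infty$.

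The third piece is immediate from Proposition \ref{proposition 6.24.2}, whose exponent hypotheses are guaranteed by Assumption \ref{assumption 7.5.1}(2) (the condition \eqref{6.27.1} reads exactly $p>(d/2)( q /( q -1))$): it yields $E\int_{0}^{T}\Sigma_{3}\,dt\leq N\|b-\phi_{\varepsilon}\|_{2p,2 q ,T}^{2}<N\varepsilon^{2}$. For the middle piece I will first note that $x_n(\kappa_n(\cdot))\to x(\cdot)$ uniformly in $t\in[0,T]$ in probability: the given uniform convergence $x_n\to x$ in $C([0,T])$ makes $\{x_n\}$ equicontinuous in probability, whence $\sup_{t\leq T}|x_n(\kappa_n(t))-x_n(t)|\to 0$ in probability, and one combines this with the given $\sup_{t\leq T}|x_n(t)-x(t)|\to 0$. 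The uniform continuity and boundedness of $\phi_{\varepsilon}$, together with bounded convergence, then give $E\int_{0}^{T}\Sigma_{2}\,dt\to 0$ as $n\to\infty$ for each fixed $\varepsilon$.

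The first piece is the real obstacle. Since $b_n$ is bounded for each $n$ (being the tamed approximation) and $\phi_{\varepsilon}$ is bounded, $|b_n-\phi_{\varepsilon}|^{2}$ is a bounded nonnegative Borel function, so Proposition \ref{proposition 6.21.1} applies and gives, for any $\gamma\in(0,( q -1)/ q )$,
\begin{equation*}
E\int_{0}^{T}\Sigma_{1}\,dt\leq N\bigl(\|b_n-\phi_{\varepsilon}\|_{2p,2 q ,T}^{2}+d_n^{\gamma}(T)\|b_n-\phi_{\varepsilon}\|_{\infty,2 q ,T}^{2}\bigr).
\end{equation*}
The $L_{2p,2 q }$ summand tends to $\|b-\phi_{\varepsilon}\|_{2p,2 q ,T}^{2}<\varepsilon^{2}$ as $n\to\infty$. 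The $L_{\infty}$ summand is delicate because $\|b_n\|_{\infty,2 q ,T}$ need not be uniformly bounded; we only know from Assumption \ref{assumption 7.5.1}(3) that $d_n^{\gamma_{0}/2}(T)\|b_n\|_{\infty,2 q ,T}\leq B(T)$ for some $\gamma_{0}\in(0,( q -1)/ q )$. The key trick will be to exploit the freedom of Proposition \ref{proposition 6.21.1} and choose $\gamma$ strictly between $\gamma_{0}$ and $( q -1)/ q $ (possible since $\gamma_{0}<( q -1)/ q $ strictly), producing an extra vanishing factor $d_n^{\gamma-\gamma_{0}}(T)\to0$ and yielding
\begin{equation*}
d_n^{\gamma}(T)\|b_n-\phi_{\varepsilon}\|_{\infty,2 q ,T}^{2}\leq 2B(T)^{2}d_n^{\gamma-\gamma_{0}}(T)+2d_n^{\gamma}(T)\|\phi_{\varepsilon}\|_{\infty,2 q ,T}^{2}\to 0.
\end{equation*}
Combining the three pieces gives $\limsup_{n}E\int_{0}^{T}|b_n(t,x_n(\kappa_n(t)))-b(t,x(t))|^{2}\,dt\leq C\varepsilon^{2}$, and sending $\varepsilon\to 0$ finishes the proof. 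The crux is thus the calibration of $\gamma$ against the $\gamma_{0}$ supplied by Assumption \ref{assumption 7.5.1}(3).
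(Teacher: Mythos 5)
Your argument is correct, and it runs on the same machinery as the paper's proof --- Proposition \ref{proposition 6.21.1} for the Euler process, Proposition \ref{proposition 6.24.2} for the limit process, the density of smooth compactly supported functions in $L_{2p,2q}(T)$, and, crucially, the calibration of $\gamma$ strictly between the exponent $\gamma_{0}$ of Assumption \ref{assumption 7.5.1}(3) and $(q-1)/q$, so that $d_n^{\gamma}(T)\|b_n\|^2_{\infty,2q,T}\leq B^2(T)\,d_n^{\gamma-\gamma_{0}}(T)\to0$ --- but your decomposition is genuinely more direct. The paper first interposes the truncations $b_n^m=b_nm/(m+|b_n|)$ and $b^m=bm/(m+|b|)$, and only afterwards approximates $b^m$ by a continuous bounded $\bar b$, so the passage from $b_n(\cdot,y_n(\cdot))$ to $b(\cdot,x(\cdot))$ goes through several comparison terms; you collapse this two-stage approximation into a single $\phi_{\varepsilon}$ approximating $b$ in $L_{2p,2q,T}$, which works precisely because the $\gamma$-trick controls the unbounded part of $b_n$ directly against $\phi_{\varepsilon}$, and because $b_n\to b$ in $L_{2p,2q,T}$ handles the $\|b_n-\phi_{\varepsilon}\|_{2p,2q,T}$ term without any prior truncation. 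One small imprecision: your justification for applying Proposition \ref{proposition 6.21.1} to $|b_n-\phi_{\varepsilon}|^{2}$, namely that ``$b_n$ is bounded, being the tamed approximation,'' is not guaranteed by Assumption \ref{assumption 7.5.1}, which only gives $\|b_n\|_{\infty,2q,T}<\infty$ (i.e.\ $\sup_x|b_n(t,x)|$ lies in $L_{2q}$ in $t$), not uniform boundedness in $(t,x)$. This is harmless --- estimate \eqref{estimate 6.22.2} extends to every nonnegative Borel $f$ by applying it to $f\wedge R$ and letting $R\to\infty$ by monotone convergence (the paper's own application to $|b_n-b_n^m|^2$ tacitly uses the same extension) --- but the correct justification is this truncation argument rather than boundedness of $b_n$. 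Your treatment of the middle term via $\sup_{t\leq T}|x_n(\kappa_n(t))-x(t)|\to0$ in probability plus bounded convergence, and of the third term via Proposition \ref{proposition 6.24.2} with the exponent condition \eqref{6.27.1}, is exactly what is needed.
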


\begin{proof} To ease the notation we write $y_n(t)=x_n(\kappa_n(t))$.
For $m=1,2,...$
introduce
$$
b_{n}^{m}=b_{n}m/(m+ |b_{n}|),\quad
b ^{m}=b m/(m+ |b |).
$$
Note that $b_{n}\to b$ in measure, hence,
$b^{m}_{n}\to b^{m}$ in measure for each $m$,
and since all these functions are uniformly integrable (in $L_{2p,2 q ,T}$-sense),
$b^{m}_{n}\to b^{m}$ in $L_{2p,2 q ,T}$ for each $m$.
Observe also that for $\gamma$, which is strictly less than $( q -1)/ q $ but larger than the one in Assumption \ref{assumption 7.5.1}
(3), we have
$$
\lim_{n\to\infty}
d_n^{\gamma}(T)\|(b_{n}-b_{n}^{m})^{2}\|_{\infty, q ,T} \leq 4\lim_{n\to\infty}
d_n^{\gamma}(T)\| b_{n} ^{2}\|_{\infty, q ,T} =0.
$$
Hence and 
by Proposition \ref{proposition 6.21.1}
$$
 \limsup _{n\to \infty}E\int_0^T|b _{n}(t,y_n(t))
-b_{n}^{m}(t,y_n(t))|^{2}\,dt
$$
$$
\leq N\lim_{n\to \infty}(\|\,|b_{n}-b_{n}^{m}|^{2}\|_{p, q ,T}+d_n^{\gamma}(T)\|\,|b_{n}-b_{n}^{m}|^{2}\|_{\infty, q ,T})=N
\|\,|b -b ^{m}|^{2}\|_{p, q ,T},
$$
which can be made arbitrarily small if we choose
$m$ large enough.
It follows from here and Proposition \ref{proposition 6.24.2}
that to prove the lemma it suffices to prove that for each $m$
$$
\lim_{n\to\infty}E\int_0^T|b_{n}^{m} (t,y_n(t))-b^{m}(t,x(t))|^{2}\,dt=0.
$$
Furthermore, again by Proposition \ref{proposition 6.21.1}
$$
\lim_{n\to\infty}E\int_0^T|b_{n}^{m} (t,y_n(t))-b^{m}(t,y_n(t))|^{2}\,dt 
$$
$$
\leq N\lim_{n\to \infty}(\|\,|b_{n}^{m}-b ^{m}|^{2}\|_{p, q ,T}+d_n^{\gamma}(T)\|\,|b_{n}^{m}-b ^{m}|^{2}\|_{\infty, q ,T})
$$
\begin{equation}
                             \label{7.5.3}
\leq N\lim_{n\to \infty}4m^{2}T^{1/ q }d_n^{\gamma}(T)=0,
\end{equation}
which reduces the proof to showing that
for each $m$
$$
I:=\lim_{n\to\infty}E\int_0^T|b ^{m} (t,y_n(t))-b^{m}(t,x(t))|^{2}\,dt=0.
$$
Observe that  for any continuous   bounded $\bR^{d}$-valued
$\bar b(t,x)$ we obviously  have
$$
I\leq 9\lim_{n\to\infty}E \int_0^T|b ^{m} (t,y_n(t))-\bar b (t, y_n(t))|^{2}
+9E \int_0^T|b ^{m}(t,x(t))-\bar b(t, x(t))|^{2}
$$

Here the first term on the right is dominated by
\begin{equation}
                             \label{7.5.4}
N\|\,|b^{m}-\bar b |^{2}\|_{p, q ,T},
\end{equation}
which is proved similarly to \eqref{7.5.3}, and
the second term is dominated by the same expression in light of
Proposition \ref{proposition 6.24.2}.
After that it only remains to notice that
\eqref{7.5.4} can be made as small as we wish
for an appropriate $\bar b$ since
$C([0,T],C^{\infty}_{0}(\bR^{d}))$ is dense
in $L_{2p,2 q }(T)$ in light of the condition  $p, q <\infty$
(used for the first and the only time).
The lemma is proved. 
\end{proof}

 {\bf Acknowledgment.} 
  The authors are sincerely grateful to T. Shiga 
 and to the anonymous referee for drawing their attention 
 to paper \cite{KN_88}.   
 The main part of this  research was completed during
the stay of the first author at the University of Minnesota, and he
is grateful for the hospitality, excellent working  conditions and
for the inspiring scientific atmosphere of this institution.    
   The authors are also grateful
to Martin Dieckmann who found errors
  in the  original proof of Lemma 4.1  in 2013.

\end{document}